\crefname{hypothesis}{Hypothesis}{Hypotheses}
\Crefname{ALC@unique}{Line}{Lines}
\colorlet{texcscolor}{blue!50!black}
\colorlet{texemcolor}{red!70!black}
\colorlet{texpreamble}{red!70!black}
\colorlet{codebackground}{black!25!white!25}
\lstdefinestyle{siamlatex}{%
  style=tcblatex,
  texcsstyle=*\color{texcscolor},
  texcsstyle=[2]\color{texemcolor},
  keywordstyle=[2]\color{texemcolor},
  moretexcs={cref,Cref,maketitle,mathcal,text,headers,email,url},
}
\DeclareTotalTCBox{\code}{ v O{} }
{ fontupper=\ttfamily\color{black},
  nobeforeafter,
  tcbox raise base,
  colback=codebackground,colframe=white,
  top=0pt,bottom=0pt,left=0mm,right=0mm,
  leftrule=0pt,rightrule=0pt,toprule=0mm,bottomrule=0mm,
  boxsep=0.5mm,
  #2}{#1}
\patchcmd\newpage{\vfil}{}{}{}
\title{High-order BDF convolution quadrature for stochastic fractional evolution equations driven by integrated additive noise
\thanks{
\funding{This research was supported by the Science Fund for Distinguished Young Scholars of Gansu Province under Grant No. 23JRRA1020.}}}
\author{Minghua Chen\thanks{School of Mathematics and Statistics, Gansu Key Laboratory of Applied Mathematics and Complex Systems, Lanzhou University, Lanzhou 730000, People$'$s Republic of China. (\email{chenmh@lzu.edu.cn}, \email{shijk20@lzu.edu.cn}, \email{220220934071@lzu.edu.cn}).}
\and Jiankang Shi\footnotemark[2]
\and Zhen Song\footnotemark[2]
\and Yubin Yan\thanks{Department of Physical, Mathematical and Engineering Sciences, University of Chester, Chester CH1 4BJ, UK. (\email{y.yan@chester.ac.uk}).}
\and Zhi Zhou\thanks{Department of Applied Mathematics, The Hong Kong Polytechnic University, Kowloon, Hong Kong. (\email{zhizhou@polyu.edu.hk}).}
}
\begin{document}
\maketitle


\begin{abstract}
The numerical analysis of stochastic time fractional evolution equations presents considerable challenges due to the limited regularity of the model caused by the nonlocal operator and the presence of noise.
 The existing time-stepping methods exhibit a significantly low order convergence rate.
In this work, we introduce a smoothing technique and develop the novel high-order schemes for solving the linear  stochastic fractional evolution equations driven by integrated additive noise.
Our approach involves regularizing the additive noise through an $m$-fold integral-differential calculus, and discretizing the equation using the $k$-step BDF convolution quadrature.
This novel method, which we refer to as the ID$m$-BDF$k$ method, is able to achieve higher-order convergence in solving the stochastic models.
Our theoretical analysis reveals that the convergence rate of the ID$2$-BDF2 method is $O(\tau^{\alpha + \gamma -1/2})$ for $1< \alpha + \gamma \leq 5/2$, and $O(\tau^{2})$ for $5/2< \alpha + \gamma <3$, where $\alpha \in (1, 2)$ and $\gamma \in (0, 1)$ denote the time fractional order and the order of the integrated noise, respectively.
Furthermore, this convergence rate could be improved to $O(\tau^{\alpha + \gamma -1/2})$ for any $\alpha \in (1, 2)$ and $\gamma \in (0, 1)$, if we employ the ID$3$-BDF3 method.
The argument could be easily extended to the subdiffusion model with $\alpha \in (0, 1)$.
Numerical examples are provided to support and complement the theoretical findings.

\end{abstract}

\begin{keywords}
stochastic fractional evolution equation, integrated additive noise, high-order algorithm, error estimate
\end{keywords}


\section{Introduction}\label{Sec1}
In this paper, we study high-order time discretization schemes for solving  the following linear stochastic fractional evolution equation
 driven by integrated additive noise  \cite{GunzburgerSharp2019,JinNumerical2019,KangGalerkin2022},  for $\alpha \in (1, 2)$ and $ \gamma \in (0, 1)$,
\begin{equation}\label{SDE1.1}
 \partial^{\alpha}_{t} \left(u(t) - \upsilon - t b \right) - A u  = \partial^{-\gamma}_{t} \frac{d W(t)}{dt}, \quad t \in (0, T]
 \end{equation}
with the initial values $u(0) = v \in H$, $u_{t} (0) = b \in H$ where $H = L_{2}(\mathcal{O})$ and $ \mathcal{O} \subset \mathbb{R}^{d}$, $d=1, 2, 3$ is a bounded domain with the smooth boundary.
Here $u_{t}$ means the time derivative and the operator $A = \Delta$, $\mathcal{D} (A) = H_{0}^{1} (\mathcal{O}) \cap H^{2}(\mathcal{O})$ and $\Delta$ is the Laplacain operator.
The operators $\partial_{t}^{\alpha}$ and $\partial_{t}^{-\gamma}$ denote the Riemann-Liouville fractional derivative of order $\alpha \in (1,2)$ and integral of order $\gamma \in (0,1)$, respectively, defined by \cite[p.\,62]{PodlubnyFractional1999}
\begin{equation*}\label{RL1.2}
\partial^{\alpha}_{t} u(t) \!=\! \frac{1}{\Gamma(2-\alpha)}\frac{d^{2}}{dt^{2}}\! \int^{t}_{0} {(t-s)}^{1-\alpha}u(s) ds~~{\rm and}~~
\partial^{-\gamma}_{t} u(t)\!=\! \frac{1}{\Gamma(\gamma)}\! \int^{t}_{0} { {(t-s)}^{\gamma-1}u(s)} ds.
\end{equation*}

The additive noise $\frac{dW(t)}{dt}$ is expressed by \cite{DuNumerical2002}
\begin{equation}\label{SDE1.2}
\frac{dW(t)}{dt} = \sum_{j=1}^{\infty} \sigma_{j} (t) \dot{\beta}_{j} (t) \varphi_{j},
\end{equation}
where $\beta_{j}(t) $ are the independently identically distributed Brownian motions and $\varphi_{j}$, $j=1, 2, \ldots$ are the eigenfunctions of the operator $A$.
Here $\sigma_{j} (t)$, $j=1, 2, \ldots$ are the rapidly decay functions as $j$ increases.
There are two important cases, see \cite{YanGalerkin2005}

Case 1: when $\sigma_{j} (t)=1$, the noise \eqref{SDE1.2} is called the white noise.

Case 2: when $\sigma_{j}(t) = \gamma_{j}^{1/2}$  with $\sum_{j} \gamma_{j} < \infty$, the noise \eqref{SDE1.2} is called trace class noise.
We shall focus on the Case 2 in this work. 

It is well known that the operator $A$ satisfies the resolvent estimate \cite{LubichNonsmooth1996,ThomeeGalerkin2006}
\begin{equation*}\label{resolvent estimate}
\big\| {(z-A)}^{-1} \big\| \leq c_{\theta} |z|^{-1} \quad \forall z\in \Sigma_{\theta}
\end{equation*}
for all $\theta\in (\pi/2, \pi)$, where $\Sigma_{\theta}:=\{ z\in \mathbb{C}\backslash \{0\}:|\arg z| < \theta \}$ is a sector of the complex plane.
Choose the angle $\theta$ such that $\pi/2 < \theta < \min{(\pi, \pi/\alpha)}$, and it holds
\begin{equation}\label{SDE1.4}
\big\| {\left(z^{\alpha}-A\right)}^{-1} \big\| \leq c |z|^{-\alpha} \quad \forall z\in \Sigma_{\theta}.
\end{equation}

The deterministic time fractional diffusion-wave equation \eqref{SDE1.1} provides an effective description of wave propagation, particularly when there is a prevalent power-law relationship between attenuation and frequency.
This behavior is frequently encountered in various scenarios, such as the propagation of acoustic waves in lossy media \cite{SzaboTime1994}.
The exponent of the power-law typically falls within the range of $1$ to $2$, indicating anomalous attenuation \cite{ChenFractional2004}.
In real-world systems, stochastic perturbations emerge from diverse natural sources \cite{Evansintroduction2013}, and their impact cannot be overlooked.
Therefore it is necessary to study the model \eqref{SDE1.1} in both theoretically and numerically.

This paper focuses on the time discretization of the stochastic evolution equation.
Yan \cite{YanGalerkin2005} explored the use of the Euler method for approximating stochastic parabolic equations.
Gunzburger et al. \cite{GunzburgerSharp2019} studied the time discretization methods  for stochastic time-fractional partial differential equations.
Jin et al. \cite{JinNumerical2019} conducted an investigation into the BDF convolution quadrature as an approximation tool for the stochastic subdiffusion equation.
Dai et al. \cite{DaiMittagLeffler2023} considered the Mittag-Leffler Euler integrator for solving the stochastic subdiffusion equation.
Al-Maskari and Karra \cite{AlMaskariStrong2023} focused on the time approximation of the stochastic time-fractional Allen-Cahn equation with a fractional order $ \alpha \in (0, 1)$.
Fahim et al. \cite{FahimSome2023} investigated the approximation of the mild solution of the stochastic fractional order evolution equation.
Kang et al. \cite{KangGalerkin2022} addressed the approximation of the regularized stochastic semilinear subdiffusion equation.
See also \cite{LiGalerkin2022} for the well-posedness analysis and \cite{LiGalerkin2022,GunzburgerConvergence2019,ZouError2018} for the numerical analysis for the stochastic time-fractional models driven by multiplicative noise.

For the model problem \eqref{SDE1.1}, Li et al. \cite{LiGalerkin2017} approximated the noise using a piecewise constant function and derived a regularized problem, demonstrating that the approximate order is $O(\tau)$. Egwu and Yan \cite{egwyan} subsequently extended the results of \cite{LiGalerkin2017} to the semilinear case.
They established that the approximate order is $O(\tau^{\min (1, \alpha + \gamma - \frac{1}{2} - \epsilon)})$, where $ \alpha \in (1, 2)$, $\gamma \in [0, 1]$, and $ \epsilon >0$.
However both works by Li et al. \cite{LiGalerkin2017} and Egwu and Yan \cite{egwyan} did not introduce or analyze any time discretization schemes for \eqref{SDE1.1}.
To the best of our knowledge, no time discretization schemes  were reported for solving the stochastic model \eqref{SDE1.1}.
Comparing with its deterministic counterpart \cite{JinZhou:book}, the development of time discretization schemes for stochastic model \eqref{SDE1.1} is more challenging, due to the additional restriction on the smoothing property caused by the noise.
We apply the convolution quadrature generated by high-order BDF scheme \cite{ChenDiscretized2015,LubichDiscretized1986} to formulate the discretization schemes for \eqref{SDE1.1}.
In order to improve the low convergence rate due to the insufficient regularity, we regularize the additive noise \eqref{SDE1.2} by using an $m$-fold integral-differential calculus.
This novel method, which we refer to as the ID$m$-BDF$k$ method, is able to achieve higher-order convergence in solving the stochastic model \eqref{SDE1.1}.
Such idea has been previously used for dealing with the time discretization schemes for the deterministic subdiffusion with a singular source term $t^{-\gamma}  f(t)$, $\gamma >0$  \cite{ShiHighorder2023}.
The singular source term in \cite{ShiHighorder2023} is  regularized by an $m$-fold integral-differential operator
\begin{equation*}
t^{-\gamma} f(t) = \partial_{t}^{m} \partial_{t}^{-m}\left(t^{-\gamma} f(t)\right)=\partial_{t}^{m} \left( \frac{t^{m-1} }{\Gamma(m)} \ast  \left(t^{-\gamma} f(t)\right) \right),
\end{equation*}
where $v \ast w$ denotes the convolution of $v$ and $w$.

In this work, the additive noise \eqref{SDE1.2} is subjected to a two-step process: first, integration, and then second differentiation, using an $m$-fold integral-differential operator.
We then have, noting  $ \partial_{t}^{-m} \frac{dW (t)}{dt} \big|_{t=0} =0$, $m\geq 1$
\begin{equation}\label{SDE1.5}
\partial_{t}^{-\gamma} \frac{d W(t)}{dt}
= \partial_{t}^{-\gamma}  \partial_{t}^{m} \left( \partial_{t}^{-m}   \frac{d W(t)}{ dt} \right)
= \partial_{t}^{m- \gamma} \left( \frac{t^{m-1} }{\Gamma(m)} \ast   \frac{d W(t)}{ dt} \right),
\end{equation}
which is equivalent to
\begin{equation*}
\partial_{t}^{-\gamma} \frac{d W(t)}{dt}
= \partial_{t}^{m-\gamma}  \partial_{t}^{\gamma-m} \left( \partial_{t}^{-\gamma}   \frac{d W(t)}{dt} \right)
= \partial_{t}^{m- \gamma} \left( \frac{t^{m-1} }{\Gamma(m)} \ast   \frac{d W(t)}{ dt} \right).
\end{equation*}

Let us introduce the main results in this work below.
Let $V(t)= u(t)- \upsilon - tb$.
Then \eqref{SDE1.1} can be written as, with $V(0) = V^{\prime} (0) =0$,
\begin{equation}\label{SDE1.6}
\partial_{t}^{\alpha} V(t) - A  V (t) = A \upsilon + t A b + \partial_{t}^{-\gamma} \frac{dW(t)}{dt}.
\end{equation}

Choose $m=2$ in \eqref{SDE1.5} and denote $g(t) =t \ast \frac{d W(t)}{ dt},$
 we have by \eqref{SDE1.6}, with $V(0) = V^{\prime} (0) =0$,
\begin{equation}\label{SDE1.7}
\partial^{\alpha}_{t} V(t) - A V(t) = \partial_{t} \left( t A \upsilon + \frac{t^2}{2} A b \right) + \partial^{ 2-\gamma}_{t} g(t).
\end{equation}

For the sake of simplicity in notation, this introduction section will only present the key findings concerning the noise term $W$.
Therefore, we currently assume $\upsilon=b=0$.
Applying the Laplace transform method, the solution to the problem \eqref{SDE1.7} could be represented as
\begin{equation}\label{SDE1.8}
V(t) = \big( \mathscr{E} \ast g \big) (t), \quad \text{with} \quad \mathscr{E}(t) = \frac{1}{2\pi i} \int_{\Gamma_{\theta,\kappa}} e^{zt}(z^{\alpha} - A)^{-1} z^{2-\gamma} dz.
\end{equation}
Here $\Gamma_{\theta, \kappa}$ denotes the contour
\begin{equation}\label{SDE1.9}
\Gamma_{\theta, \kappa}= \{ z\in \mathbb{C}: |z|=\kappa, |\arg z|\leq \theta \} \cup \{ z\in \mathbb{C}: z=re^{\pm i\theta}, r\geq \kappa\},
\end{equation}
oriented counterclockwise, with $\pi/2 < \theta < \min{(\pi, \pi/\alpha)}$ and $\kappa>0$.

Let ${\{\sigma_{j}^{l}\}}_{j=1}^{\infty}$ approach ${\{\sigma_{j}\}}_{j=1}^{\infty}$ as $l \rightarrow \infty$ in some appropriate sense \cite{DuNumerical2002}.
Thus an approximation of additive noise $\frac{d W(t)}{ dt} $ in \eqref{SDE1.2} is defined  by
\begin{equation} \label{W_l}
\frac{d W_{l}(t)}{dt} = \sum_{j=1}^{\infty} \sigma_{j}^{l}(t) \dot{\beta}_{j}(t) \varphi_{j}.
\end{equation}
Denoting 
\begin{equation} \label{g_l}
g_{l}(t) 
=t \ast \frac{d W_{l}(t)}{ dt}, 
\end{equation}
and substituting $\frac{d W_{l}(t)}{dt}$ for $\frac{d W(t)}{dt} $ in \eqref{SDE1.7}, we obtain, with $V_{l}(0)= 0$ and $V_{l}'(0)=0$,
\begin{equation} \label{SSDE1.12}
\partial^{\alpha}_{t} V_{l}(t) - A V_{l}(t) = \partial_{t} \left( t A\upsilon + \frac{t^{2}}{2} Ab\right) +  \partial^{2-\gamma}_{t} g_{l}(t),
\end{equation}
which has the solution representation, with $g_{l}(t)$ defined by \eqref{g_l}, 
\begin{equation}\label{SDE1.8_1}
V_{l}(t) = \big( \mathscr{E} \ast g_{l} \big) (t). 
\end{equation}

Let $0 = t_{0} < t_{1} < \dots < t_{N}= T$ be a partition of $[0, T]$ and $\tau$ the time step size.
Let $V_l^{n} \approx V_{l}(t_{n})$ be the approximate solution of $V_{l}(t_{n})$.
Approximating $\partial_{t}^{\beta} V_{l}(t_{n})$, $\beta = \alpha, 2- \gamma$ with the ID$2$-BDF$2$ convolution quadrature, we obtain the following time discretization scheme of \eqref{SDE1.7}, with $ \upsilon = b=0$,
\begin{equation*}
\begin{split}
&{\partial}_{\tau}^{\alpha} V_{l}^{n} - A V_{l}^{n} = {\partial}_{\tau}^{2- \gamma} g_{l} (t_{n}), \quad n=1, 2, \dots,   \\
& V_{l}^{0}=0.
\end{split}
\end{equation*}
Here, the discrete fractional-order derivative $\partial_{\tau}^{\beta}$ is defined by, with $\beta = \alpha, 2- \gamma$,
\begin{equation}\label{SDE1.10}
{\partial}_{\tau}^{\beta} \varphi^{n} = \tau^{- \beta} \sum_{j=0}^{n} w_{j}^{(\beta)} \varphi^{n-j},
\end{equation}
and the weights $w_{j}^{( \beta)}$ are generated by
$\delta_{\tau}^{\beta} (\xi) = \sum_{j=0}^{\infty} w_{j}^{(\beta)} \xi^{j}$,
where
\begin{equation} \label{SDE1.11}
\delta_{\tau}(\xi) := \tau^{-1} \left( \frac{3}{2} - 2 \xi + \frac{1}{2} \xi^{2} \right).
\end{equation}

Using the discrete Laplace transform method, we arrive at
\begin{equation*}
V_{l}^{n} = \left( \mathscr{E}_{\tau} \ast g_{l} \right)(t_{n}),
\end{equation*}
where $\mathscr{E}_{\tau} (t)= \sum_{n=0}^{\infty} \mathscr{E}_{\tau}^{n} \delta (t- t_{n})$.
Here $\mathscr{E}_{\tau}^{n}$ is some approximation of $\mathscr{E} (t_{n})$ and  $ \delta$ means the Dirac $\delta$ function.

In Theorem \ref{SDETheorem2.8}, for any $ \varepsilon >0$, we prove that, with $v=b=0$,
\begin{align}
\mathbb{E} \left\|V_{l}(t_{n})-V_{l}^{n}\right\|^{2} =
\begin{cases}
 O \left( \tau^{2\alpha+2\gamma-1-2\varepsilon}  \right) t_{n}^{2\varepsilon}, &  1 < \alpha+\gamma <  \frac{5}{2}, \notag \\
 O \left( \tau^{4} \right) t_{n}^{2\alpha +2\gamma-5},                         &  \frac{5}{2} < \alpha+\gamma <  3.
\end{cases}
\end{align}

The argument could be easily extended to higher-order schemes.
Denoting 
\begin{equation} \label{g_l_1}
g_{l}(t) 
=\frac{t^2}{2} \ast \frac{d W_{l}(t)}{dt},
\end{equation}
 we may rewrite \eqref{SSDE1.12} as, with $V_{l}(0) = V_{l}^{\prime} (0) =0$,
\begin{equation}\label{SDE1.12}
\partial^{\alpha}_{t} V_{l} (t) - A V_{l}(t) = \partial^{2}_{t} \left( \frac{t^{2}}{2} A\upsilon + \frac{t^{3}}{6} Ab \right) + \partial^{ 3-\gamma}_{t} g_{l} (t).
\end{equation}

Using the Laplace transform method, we have, with $\upsilon = b=0$,
\begin{equation*} \label{exact_1}
V_{l}(t)= \left( \mathscr{E} \ast  g_{l} \right)(t) \quad {\rm and} \quad
\mathscr{E}(t)= \frac{1}{2\pi i} \int_{\Gamma_{\theta,\kappa}} e^{zt}(z^{\alpha} - A)^{-1} z^{3-\gamma} dz,
\end{equation*}
where $\Gamma_{\theta,\kappa}$ is defined in \eqref{SDE1.9}.

Approximating $\partial_{t}^{\beta} \varphi(t)$, $\beta=\alpha, 3-\gamma$ with BDF$3$ convolution quadratures, we obtain the following ID$3$-BDF$3$ time discretization scheme of \eqref{SDE1.12}, with $ \upsilon = b=0$,
\begin{equation*}
\begin{aligned}
&{\partial}_{\tau}^{\alpha} V_{l}^{n} - A V_{l}^{n} = {\partial}_{\tau}^{3 - \gamma} g_{l} (t_{n}), \quad n=1, 2, \dots,   \\
& V_{l}^{0}=0,
\end{aligned}
\end{equation*}
where, with $\beta = \alpha, \, 3- \gamma$,
\begin{equation}\label{SDE1.13}
{\partial}_{\tau}^{\beta} V_{l}^{n} = \tau^{- \beta} \sum_{j=0}^{n} w_{j}^{( \beta)} \xi^{j},
\end{equation}
and the weights $w_{j}^{( \beta)}$ are generated by
$\delta_{\tau}^{\beta}(\xi) = \sum_{j=0}^{\infty} w_{j}^{(\beta)} \xi^{j}$,
where
\begin{equation}\label{SDE1.14}
\delta_{\tau}(\xi) := \tau^{-1} \left( \frac{11}{6} - 3 \xi + \frac{3}{2} \xi^{2} - \frac{1}{3} \xi^3 \right).
\end{equation}

In Theorem \ref{SDETheorem3.5}, for any $ \varepsilon >0$, we prove that, with $\upsilon=b=0$ and for $\alpha \in (1, 2)$  and $\gamma \in (0, 1)$,
\begin{align}
\mathbb{E} \left\| V_{l} (t_{n})- V_{l}^{n}\right\|^{2} = O \left( \tau^{2\alpha+2\gamma-1-2\varepsilon}  \right)   t_{n}^{2\varepsilon},  \notag
\end{align}
which is better than the convergence order $O( \tau^{\min (1, \alpha +\gamma -\frac{1}{2} - \epsilon)})$ obtained in \cite{egwyan} in the case of  $ \alpha +\gamma -\frac{1}{2}  >1$.

The rest of the paper is organized as follows.
In Sections \ref{Sec2} and \ref{Sec3}, we propose the ID$2$-BDF$2$ and ID$3$-BDF$3$ time discretization schemes, respectively, for approximating the stochastic time fractional diffusion-wave equation \eqref{SDE1.1}, where the detailed theoretical analyses of the convergence are derived.
We provide some numerical simulations to show that the numerical results are consistent with the theoretical findings in Section \ref{Sec4}.

Throughout this paper, we denote $c$ as a generic constant that is independent of the step size $\tau$, which could be different at different occurrences. Additionally, we always assume $\epsilon > 0$ is a small positive constant.

\section{ID$2$-BDF$2$ method}\label{Sec2}
For the linear stochastic fractional evolution equation with integrated additive noise, the predominant time-stepping methods lead to  low order error estimates with $O\left(\tau^{\min\{\alpha + \gamma -1/2,1\}}\right)$, see \cite{egwyan,KangGalerkin2022}, also see Table \ref{table:1} by ID$1$-BDF$2$ method.
To break the first-order barrier in such time-stepping methods, it motivates us to consider the ID$2$-BDF2 method.
In this work, we shall frequently use the following It\^{o} isometry property, see \cite[p.67]{Evansintroduction2013}.
\begin{proposition}[It\^{o} Isometry Property]\label{itoIs}
Let $ \{ \psi (s): s \in [0, T] \}$ be a real-valued predictable process such that   $  \int^{T}_{0} \mathbb{E} | \psi (s) |^2 ds < \infty$.  Let $B(t)$ denote a real-valued standard Brownian motion. Then the following isometry equality holds for $t \in (0, T]$
\begin{equation*}
\mathbb{E} \left\| \int_{0}^{t} \psi(s) d B(s) \right\|^{2} = \int_{0}^{t} \mathbb{E} \left\|\psi(s)\right\|^{2} ds,
\end{equation*}
where $\mathbb{E}$ denotes the expectation.
\end{proposition}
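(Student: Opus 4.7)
The plan is to prove the It\^{o} isometry via the standard two-step strategy: first establish it for elementary (step) predictable processes by direct computation, and then extend to the full space $L^{2}(\Omega \times [0,T])$ of square-integrable predictable processes by a density argument. Although this is a classical result, presenting the argument clearly clarifies which properties of $B(t)$ and which integrability hypotheses are actually being invoked in the sequel.

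First I would consider an elementary predictable process of the form $\psi(s) = \sum_{i=0}^{n-1} \eta_{i} \mathbf{1}_{(t_{i}, t_{i+1}]}(s)$, where $0 = t_{0} < t_{1} < \dots < t_{n} = t$ is a partition and each $\eta_{i}$ is an $\mathcal{F}_{t_{i}}$-measurable random variable with $\mathbb{E}|\eta_{i}|^{2} < \infty$. For such $\psi$, the stochastic integral is defined pathwise by $\int_{0}^{t} \psi(s) dB(s) = \sum_{i=0}^{n-1} \eta_{i} (B(t_{i+1}) - B(t_{i}))$. Squaring and taking expectations gives a double sum; for the off-diagonal terms $i < j$, I would condition on $\mathcal{F}_{t_{j}}$ and use that $\eta_{i}, \eta_{j}, B(t_{i+1}) - B(t_{i})$ are $\mathcal{F}_{t_{j}}$-measurable while $B(t_{j+1}) - B(t_{j})$ is independent of $\mathcal{F}_{t_{j}}$ with mean zero, so these terms vanish. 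For the diagonal terms, the independence of $\eta_{i}^{2}$ and $(B(t_{i+1}) - B(t_{i}))^{2}$ together with $\mathbb{E}(B(t_{i+1}) - B(t_{i}))^{2} = t_{i+1} - t_{i}$ yields $\mathbb{E}|\eta_{i}|^{2} (t_{i+1} - t_{i})$, and summing recovers $\int_{0}^{t} \mathbb{E}|\psi(s)|^{2} ds$.

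Next I would extend the identity to a general predictable $\psi$ with $\int_{0}^{T} \mathbb{E}|\psi(s)|^{2} ds < \infty$ by approximation. The key ingredient is the density of elementary predictable processes in $L^{2}(\Omega \times [0,T], \mathcal{P}, d\mathbb{P} \otimes ds)$, where $\mathcal{P}$ denotes the predictable $\sigma$-algebra. Choosing a sequence $\{\psi_{n}\}$ of elementary processes with $\int_{0}^{T} \mathbb{E}|\psi(s) - \psi_{n}(s)|^{2} ds \to 0$, the isometry for elementary processes shows that $\{\int_{0}^{t} \psi_{n}(s) dB(s)\}$ is Cauchy in $L^{2}(\Omega)$ and defines the stochastic integral in the limit. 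Taking limits on both sides preserves the identity, yielding the stated isometry. A final step would be to observe that the $H$-valued extension needed later (since $\psi$ may take values in $H = L^{2}(\mathcal{O})$) follows by applying the scalar result coordinatewise against the orthonormal basis $\{\varphi_{j}\}$ and summing, invoking Parseval.

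The main obstacle is not the algebraic computation but the density step: one must carefully verify that predictable processes can be approximated in the $L^{2}$-norm by elementary processes, which requires a monotone class / $\pi$-$\lambda$ argument and depends on the precise definition of predictability. In our setting this is standard because the filtration is the augmented natural filtration of $B$, but it is the single place where measure-theoretic care is genuinely needed; the rest of the proof is an immediate consequence of the independent-increment and mean-zero properties of Brownian motion.
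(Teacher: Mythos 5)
The paper does not prove this proposition at all --- it is stated as a classical fact with a citation to Evans's book, so there is no internal proof to compare against. Your two-step argument (direct computation of the cross and diagonal terms for elementary predictable processes using the independent, mean-zero increments of $B$, followed by extension via density of elementary processes in $L^{2}(\Omega\times[0,T])$, plus the coordinatewise/Parseval remark for the $H$-valued version actually used later) is the standard textbook proof and is correct.
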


\subsection{Numerical scheme and solution representation}\label{Sec2.1}
Taking the Laplace transform in both sides of~\eqref{SDE1.7}, it leads to
\begin{equation*}
\widehat{V}(z)= {(z^{\alpha}-A)}^{-1}\left( z^{-1}A\upsilon  + z^{-2}Ab  + z^{2 -\gamma} \widehat{g}  \right).
\end{equation*}
By the inverse Laplace transform, there exists~\cite{GunzburgerSharp2019,JinCorrection2017,ShiHighorder2023}
\begin{equation}\label{SDE2.1}
V(t) = \frac{1}{2\pi i} \int_{\Gamma_{\theta, \kappa}} e^{zt} {(z^{\alpha}-A)}^{-1} \left( z^{-1}A\upsilon + z^{-2}Ab + z^{2-\gamma} \widehat{g} \right) dz,
\end{equation}
where  $\Gamma_{\theta, \kappa}$ is defined in \eqref{SDE1.9}.

The solution~\eqref{SDE2.1} is also represented as
\begin{equation}\label{SDE2.2}
\begin{aligned}
V(t)
& = E(1,t) A\upsilon  + E(2,t) Ab +  \left(  \mathscr{E}  \ast  g  \right)  (t)     \\
& = E(1,t) A\upsilon  + E(2,t) Ab +  \left(  E (\gamma, t)  \ast \frac{d W(t)}{dt}   \right)(t),
\end{aligned}
\end{equation}
where $\mathscr{E}$ is defined by \eqref{SDE1.8} and, with $q=1, 2, \gamma$,
\begin{equation}\label{SDE2.3}
E(q,t) \varphi = \frac{1}{2\pi i} \int_{\Gamma_{\theta, \kappa}} e^{zt}  {(z^{\alpha}-A)}^{-1}  z^{-q} \varphi dz.
\end{equation}

 Substituting $\frac{d W_{l}(t)}{dt}$ for $\frac{d W(t)}{dt} $ in \eqref{SDE1.7}, we obtain, for $g_{l}(t)$ defined by \eqref{g_l} and 
with $V_{l}(0)= 0$ and $V_{l}'(0)=0$, 
\begin{equation} \label{SDE2.4}
\partial^{\alpha}_{t} V_{l}(t) - A V_{l}(t) = \partial_{t} \left( t A\upsilon + \frac{t^{2}}{2} Ab\right) +  \partial^{2-\gamma}_{t} g_{l}(t),
\end{equation}
which has the solution representation
\begin{equation}\label{SDE2.5}
\begin{aligned}
V_{l} (t)
& = E(1,t) A\upsilon + E(2,t) Ab + \left( \mathscr{E} \ast g_{l} \right)(t)  \\
& = E(1,t) A\upsilon + E(2,t) Ab + \left(  E(\gamma, t) \ast \frac{d W_{l}(t)}{dt} \right)(t). 
\end{aligned}
\end{equation}
The following theorem shows that $V_{l}(t)$ in \eqref{SDE2.5} indeed approximates $V(t)$ in \eqref{SDE2.2}.
\begin{theorem}\label{SDETheorem2.2}
Let $\left\{\sigma_{j}\left( t \right) \right\}$ be uniformly bounded by
\begin{equation*}
\left| \sigma_{j}\left( t \right) \right| \leq \mu_{j} \quad \forall t\in [0,T],
\end{equation*}
and the coefficients $\left\{ \sigma _{j}^{l} \right\}$ are constructed such that
\begin{equation*}
\left| \sigma_{j}\left( t \right) - \sigma_{j}^{l} \left( t \right)  \right| \leq \eta _{j}^{l}, \ \ \left| \sigma_{j}^{l}\left( t \right) \right| \leq \mu_{j}^{l} \ \ \forall t\in [0,T]
\end{equation*}
with positive sequences $\left\{ \eta _{j}^{l} \right\}$ being arbitrarily chosen, and $\left\{ \mu_{j}^{l} \right\}$  being related to $\left\{ \eta_{j}^{l} \right\}$ and $\left\{ \mu_{j} \right\}$. Assume that the infinite series
$\sum_{j=1}^{\infty} {(\eta_{j}^{l})}^{2}$ and $\sum_{j=1}^{\infty}\left( \mu_{j}^{l} \right)^{2} $
are convergent. Let $V_{l}(t)$ and $V(t)$ be the solutions of~\eqref{SDE2.5} and~\eqref{SDE2.2} with  $\alpha+\gamma>\frac{1}{2}$, respectively.
Then, for some constant $c > 0$, it holds
\begin{equation*}
\mathbb{E} \| V(t) - V_{l}(t) \|^{2} \leq c \sum_{j=1}^{\infty} {(\eta_{j}^{l})}^{2}.
\end{equation*}
\end{theorem}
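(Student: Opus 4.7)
The plan is to compute the difference $V(t)-V_l(t)$ directly from the two solution representations~\eqref{SDE2.2} and~\eqref{SDE2.5}, reduce it to a stochastic integral against a single Brownian motion for each mode, apply the It\^{o} isometry from Proposition~\ref{itoIs} mode-by-mode, and then close the estimate by a sharp bound on $E(\gamma,t)$ obtained from the contour integral~\eqref{SDE2.3}. Since the deterministic parts $E(1,t)A\upsilon+E(2,t)Ab$ cancel, the problem reduces to controlling
\begin{equation*}
V(t)-V_l(t)=\sum_{j=1}^{\infty}\int_{0}^{t}E(\gamma,t-s)\bigl(\sigma_{j}(s)-\sigma_{j}^{l}(s)\bigr)\varphi_{j}\,d\beta_{j}(s).
\end{equation*}

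Next I would exploit the independence of the Brownian motions $\{\beta_j\}$ and the orthonormality of the eigenfunctions $\{\varphi_j\}$ to write
\begin{equation*}
\mathbb{E}\|V(t)-V_l(t)\|^{2}=\sum_{j=1}^{\infty}\int_{0}^{t}\bigl(\sigma_{j}(s)-\sigma_{j}^{l}(s)\bigr)^{2}\|E(\gamma,t-s)\varphi_{j}\|^{2}\,ds,
\end{equation*}
and then invoke the hypothesis $|\sigma_j(s)-\sigma_j^l(s)|\le \eta_j^l$ to pull the $\eta_j^l$ out of the integral. What remains is the purely deterministic estimate of $\int_0^t \|E(\gamma,t-s)\|^2\,ds$.

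The main technical step, which I expect to be the key obstacle, is the sharp operator-norm bound for $E(\gamma,t)$. Using the resolvent estimate~\eqref{SDE1.4} and the standard trick of deforming the contour $\Gamma_{\theta,\kappa}$ with the scaling choice $\kappa = 1/t$, I would estimate
\begin{equation*}
\|E(\gamma,t)\|\le c\int_{\Gamma_{\theta,\kappa}} e^{\operatorname{Re}(z)t}|z|^{-\alpha-\gamma}|dz|\le c\,t^{\alpha+\gamma-1},
\end{equation*}
after splitting the contour into the two rays and the circular arc and carrying out the elementary one-variable integrals. Squaring and integrating in $s$ then yields $\int_0^t (t-s)^{2(\alpha+\gamma-1)}\,ds \le c\,t^{2\alpha+2\gamma-1}/(2\alpha+2\gamma-1)$, which is finite precisely under the hypothesis $\alpha+\gamma>1/2$; this is exactly where that assumption is used.

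Combining the three ingredients gives
\begin{equation*}
\mathbb{E}\|V(t)-V_l(t)\|^{2}\le c\,t^{2\alpha+2\gamma-1}\sum_{j=1}^{\infty}(\eta_{j}^{l})^{2}\le c\,T^{2\alpha+2\gamma-1}\sum_{j=1}^{\infty}(\eta_{j}^{l})^{2},
\end{equation*}
absorbing the $T$-dependence into the generic constant $c$, which proves the claim. The assumption $\sum_j(\mu_j^l)^2<\infty$ is not used for this particular estimate but is needed to ensure that $V_l(t)$ itself is well-defined as a square-integrable random variable; I would record this briefly before beginning the computation.
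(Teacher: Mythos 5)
Your proposal is correct and its overall architecture coincides with the paper's proof: cancel the deterministic terms, reduce to the stochastic convolution $\sum_j\int_0^t E(\gamma,t-s)(\sigma_j(s)-\sigma_j^l(s))\varphi_j\,d\beta_j(s)$, apply the It\^{o} isometry of Proposition~\ref{itoIs} mode by mode, pull out $\eta_j^l$, and reduce everything to the deterministic quantity $\int_0^t\|E(\gamma,s)\|^2\,ds$. The only genuine difference is how that last integral is bounded. You use the pointwise estimate $\|E(\gamma,s)\|\le c\,s^{\alpha+\gamma-1}$, obtained from \eqref{SDE2.3} and \eqref{SDE1.4} by collapsing the contour to $\kappa\sim 1/s$, and then integrate $s^{2\alpha+2\gamma-2}$, which is integrable at $s=0$ exactly when $\alpha+\gamma>1/2$; this is clean and arguably more transparent about where the hypothesis enters. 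The paper instead keeps $\kappa$ fixed and applies a Cauchy--Schwarz splitting \emph{inside} the contour integral, writing $|z|^{-\alpha-\gamma}=|z|^{-\beta/2}\cdot|z|^{-\alpha-\gamma+\beta/2}$ with an auxiliary $\beta\in(0,1)$, bounding one factor by $c\,s^{\beta-1}$ and then exchanging the order of integration to arrive at $c\,\kappa^{-2\alpha-2\gamma+1}$. The two routes give the same conclusion under the same condition $\alpha+\gamma>1/2$; the paper's version avoids re-deforming the contour for each $s$ but at the cost of the extra parameter $\beta$, while yours makes the singularity structure at $s=0$ explicit. Your closing remark that $\sum_j(\mu_j^l)^2<\infty$ is not needed for this particular bound but only for the well-posedness of $V_l$ is accurate and consistent with the paper, where that assumption is used only in the later error estimates.
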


\begin{proof}
Subtracting~\eqref{SDE2.5} from~\eqref{SDE2.2}, it yields
\begin{equation*}
V(t) - V_{l}(t) = E(\gamma,t) \ast \left(  \frac{d W(t)}{dt} -  \frac{d W_{l}(t)}{dt} \right).
\end{equation*}
Using the orthogonality of $\varphi_{j}$ and It\^{o} isometry property, we obtain
\begin{equation*}
\begin{split}
\mathbb{E} \| V(t) - V_{l}(t)\|^{2}
& = \mathbb{E} \left\| \int_{0}^{t} E(\gamma,t-s) \sum_{j=1}^{\infty} \left( \sigma_{j}(s) - \sigma_{j}^{l}(s) \right) \varphi_{j} d \beta_{j}(s)\right\|^{2}\\
& = \sum_{j=1}^{\infty} \int_{0}^{t} \left\| E(\gamma,t-s) \right\|^{2} {\left( \sigma_{j}(s) - \sigma_{j}^{l}(s) \right)}^{2} ds\\
& \leq \sum_{j=1}^{\infty} {\left( \eta_{j}^{l} \right)}^{2} \int_{0}^{t} \left\| E(\gamma,s) \right\|^{2} ds.
\end{split}
\end{equation*}
According to~\eqref{SDE2.3} and \eqref{SDE1.4}, choose $\beta \in (0,1)$, we estimate
\begin{equation*}\label{eq2.16}
\begin{split}
\int_{0}^{t} \left\|  E(\gamma,s) \right\|^{2} ds
& \leq c \int_{0}^{t} \left( \int_{\Gamma_{\theta, \kappa}} \left| e^{zs} \right| |z|^{-\alpha-\gamma} |dz| \right)^{2} ds \\
& \leq c \int_{0}^{t} \int_{\Gamma_{\theta, \kappa}} \left| e^{zs} \right| |z|^{-\beta} |dz| \int_{\Gamma_{\theta, \kappa}} \left| e^{zs} \right| |z|^{-2\alpha-2\gamma+\beta} |dz| ds \\
& \leq c \int_{0}^{t} s^{\beta-1} \int_{\Gamma_{\theta, \kappa}} \left| e^{zs} \right| |z|^{-2\alpha-2\gamma+\beta} |dz| ds
\leq c \kappa^{-2\alpha-2\gamma+1}.
\end{split}
\end{equation*}
Here, for $\alpha+\gamma>\frac{1}{2}$, we use
\begin{equation*}
\begin{split}
& \int_{0}^{t} s^{\beta-1} \int_{\Gamma_{\theta, \kappa}} \left| e^{zs} \right| |z|^{-2\alpha-2\gamma+\beta} |dz| ds \\
& = \int_{0}^{t}\left( s^{\beta-1} \int_{\kappa}^{+\infty} e^{rs\cos \theta} r^{-2\alpha-2\gamma+\beta} dr
    +  s^{\beta-1} \int_{-\theta}^{\theta} e^{\kappa s \cos \varphi} \kappa^{-2\alpha-2\gamma+1+\beta} d\varphi\right) ds \\
& = \int_{\kappa}^{+\infty} r^{-2\alpha-2\gamma+\beta} dr \int_{0}^{t} s^{\beta-1}  e^{rs\cos \theta} ds
    +  \int_{-\theta}^{\theta} \kappa^{-2\alpha-2\gamma+1+\beta} d\varphi \int_{0}^{t} s^{\beta-1}  e^{\kappa s \cos \varphi}  ds \\
& \leq c \int_{\kappa}^{+\infty} r^{-2\alpha-2\gamma} dr  + c \int_{-\theta}^{\theta}  \kappa^{-2\alpha-2\gamma+1} d\varphi
\leq c \kappa^{-2\alpha-2\gamma+1}.
\end{split}
\end{equation*}
Hence, for $\alpha+\gamma>\frac{1}{2}$, we have
\begin{equation*}
\mathbb{E} \| V(t) - V_{l}(t) \|^{2} \leq  c \sum_{j=1}^{\infty} {\left( \eta_{j}^{l} \right)}^{2} \kappa^{-2\alpha-2\gamma+1} \leq c \sum_{j=1}^{\infty} {\left( \eta_{j}^{l} \right)}^{2}.
\end{equation*}
The proof is completed.
\end{proof}
\begin{remark}\label{SDERemark2.3}
Note that it does't require to be uniformly bounded for the derivative of $ \sigma _{j}\left( t \right)$ in Theorem \ref{SDETheorem2.2},
which weaken the conditions of Theorem 3.3 in \cite{DuNumerical2002} and Theorem 2.6 in \cite{LiGalerkin2017}.
\end{remark}

Let $V_{l}^{n} \approx V_{l} (t_{n})$ be the approximate solution of $V_{l}(t_{n})$ in \eqref{SDE2.4}. Approximating $\partial_{t}^{ \beta} \varphi (t_{n})$, $\beta =1, \alpha, 2- \gamma$ with the BDF$2$ convolution quadrature, we obtain the following ID$2$-BDF$2$ time discretization scheme of \eqref{SDE2.4}, with $V_{l}^{0}=0$,  
\begin{equation}\label{SDE2.6}
{\partial}_{\tau}^{\alpha} V_{l}^{n} - A V_{l}^{n} = {\partial}_{\tau} \left( t_{n} A\upsilon + \frac{t_{n}^{2}}{2} Ab \right) + {\partial}_{\tau}^{2- \gamma} g_{l} (t_{n}), \quad n=1, 2, \ldots,  N,
\end{equation}
where ${\partial}_{\tau}^{\beta} \varphi^{n}$, $\beta=1, \alpha, 2- \gamma$ are defined in \eqref{SDE1.10}.

Given a sequence $(\kappa_n)_0^\infty$ and take $\widetilde{\kappa}(\zeta)=\sum_{n=0}^{\infty}\kappa_n \zeta^n$ to be its generating power series.
The representation of the discrete solution in \eqref{SDE2.6} is obtained by the following.
\begin{lemma}\label{SDELemma2.4}
Let $\delta_{\tau}$ be given in \eqref{SDE1.11} and $\rho_{j}(\xi)=\sum^{\infty}_{n=1}n^{j} \xi^{n}$ with $j=1, 2$.
Let $g_{l}(t)$ be defined by \eqref{g_l}. 
Then the discrete solution of \eqref{SDE2.6} is represented by
\begin{align}
V_{l}^{n}
& =\frac{1}{2\pi i}\int_{\Gamma^{\tau}_{\theta,\kappa}} e^{zt_n} {(\delta^{\alpha}_{\tau}(e^{-z\tau})-A)}^{-1} \delta_{\tau}(e^{-z\tau}) \tau \left( \rho_{1}(e^{-z\tau}) \tau A \upsilon +  \frac{\rho_{2}(e^{-z\tau})}{2} \tau^{2} Ab  \right) dz \notag  \\
& \quad + \frac{1}{2\pi i}\int_{\Gamma^{\tau}_{\theta,\kappa}} e^{zt_n} {(\delta^{\alpha}_{\tau}(e^{-z\tau})-A)}^{-1} \delta^{2-\gamma}_{\tau}(e^{-z\tau}) \tau  \widetilde{g_{l}}(e^{-z\tau})  dz \label{SDE2.7}
\end{align}
with $\Gamma^{\tau}_{\theta, \kappa}=\{z\in \Gamma_{\theta, \kappa}: |\Im z|\leq \pi / \tau\}$ and $\widetilde{g_{l}} (e^{-z\tau}) = \widetilde{ t_{n} \ast \frac{\partial W_{l}(t_{n})}{\partial t}} (e^{-z\tau})$.
\end{lemma}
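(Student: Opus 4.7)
\medskip

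\noindent\textbf{Proof proposal.} The plan is to work in the generating-function (discrete Laplace) domain, solve the scheme algebraically there, and then invert via a Cauchy integral with the substitution $\xi = e^{-z\tau}$.

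First, I would multiply the recursion \eqref{SDE2.6} by $\xi^n$ and sum over $n\geq 1$, using the defining property of the BDF$2$ convolution quadrature that for any sequence $(\varphi^n)$ with $\varphi^0=0$, the generating function of $\partial_\tau^\beta \varphi^n$ equals $\delta_\tau^\beta(\xi)\,\widetilde{\varphi}(\xi)$ for $\beta = 1,\alpha, 2-\gamma$. Since $V_l^0 = 0$, no boundary corrections appear. The generating functions of the forcing terms are computed directly from $\rho_j(\xi)=\sum_{n\geq 1} n^j \xi^n$: namely $\sum_{n\geq 1} t_n \xi^n = \tau\rho_1(\xi)$ and $\sum_{n\geq 1} \tfrac{t_n^2}{2}\xi^n = \tfrac{\tau^2}{2}\rho_2(\xi)$, so that the generating function of the first forcing block is $\delta_\tau(\xi)\bigl(\tau\rho_1(\xi) A\upsilon + \tfrac{\tau^2}{2}\rho_2(\xi) Ab\bigr)$, while the noise block contributes $\delta_\tau^{2-\gamma}(\xi)\,\widetilde{g_l}(\xi)$.

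Collecting terms, the operator equation in the transform domain becomes
\begin{equation*}
\bigl(\delta_\tau^{\alpha}(\xi)-A\bigr)\widetilde{V_l}(\xi) = \delta_\tau(\xi)\Bigl(\tau\rho_1(\xi)A\upsilon + \tfrac{\tau^2}{2}\rho_2(\xi)Ab\Bigr) + \delta_\tau^{2-\gamma}(\xi)\widetilde{g_l}(\xi),
\end{equation*}
and since BDF$2$ is A-stable, $\delta_\tau^\alpha(\xi)$ lies in a sector contained in $\Sigma_\theta$ for $|\xi|$ small enough, so $(\delta_\tau^\alpha(\xi)-A)^{-1}$ exists and is bounded by \eqref{SDE1.4}. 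Solving gives an explicit formula for $\widetilde{V_l}(\xi)$.

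To extract $V_l^n$, I would apply the Cauchy coefficient formula $V_l^n = \tfrac{1}{2\pi i}\oint_{|\xi|=\rho} \xi^{-n-1}\widetilde{V_l}(\xi)\,d\xi$ on a small circle, then substitute $\xi = e^{-z\tau}$, which sends the circle $|\xi| = e^{-\kappa\tau}$ bijectively onto $\Gamma^\tau_{\theta,\kappa}$ after a standard contour deformation; the factor $\tau$ in \eqref{SDE2.7} arises from $d\xi = -\tau e^{-z\tau}\,dz$ combined with $\xi^{-n-1} = e^{z\tau(n+1)}$, yielding $e^{zt_n}$ times $\tau$.

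The main technical point, which I would handle carefully but not labor here, is justifying the contour deformation from the circle to $\Gamma^\tau_{\theta,\kappa}$: one must check that the integrand is analytic in the strip between the two contours and that the truncation at $|\Im z| = \pi/\tau$ is consistent with $\xi = e^{-z\tau}$ being $2\pi i/\tau$-periodic, so that the two vertical segments at $\Im z = \pm \pi/\tau$ cancel. Invertibility of $\delta_\tau^\alpha(e^{-z\tau}) - A$ along $\Gamma^\tau_{\theta,\kappa}$ follows from the well-known property $\delta_\tau(e^{-z\tau}) \in \Sigma_\theta$ for $z\in\Gamma^\tau_{\theta,\kappa}$, which is established for the BDF$2$ symbol in the cited works \cite{LubichDiscretized1986}. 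Once the deformation is justified, collecting terms gives precisely \eqref{SDE2.7}.
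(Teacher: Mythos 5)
Your proposal is correct and follows essentially the same route as the paper: multiply by $\xi^n$, sum with $V_l^0=0$ to obtain $\widetilde{V_l}(\xi)=(\delta_\tau^\alpha(\xi)-A)^{-1}\bigl(\delta_\tau(\xi)(\rho_1(\xi)\tau A\upsilon+\tfrac{\rho_2(\xi)}{2}\tau^2 Ab)+\delta_\tau^{2-\gamma}(\xi)\widetilde{g_l}(\xi)\bigr)$, then recover $V_l^n$ via the Cauchy coefficient formula and the substitution $\xi=e^{-z\tau}$ with a deformation onto $\Gamma^\tau_{\theta,\kappa}$. The paper delegates the deformation details to the cited reference exactly as you indicate, so no gap remains.
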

\begin{proof}
Multiplying the~\eqref{SDE2.6} by $\xi^{n}$ and summing over $n$ with $V_{l}^0=0$, we obtain
\begin{equation*}
\sum^{\infty}_{n=1} \partial^{\alpha}_{\tau} V_{l}^{n} \xi^{n} - \sum^{\infty}_{n=1}  AV_{l}^{n}  \xi^{n}
= \sum^{\infty}_{n=1} {\partial}_{\tau} \left( t_{n} A\upsilon + \frac{t_{n}^{2}}{2} Ab\right)  + \sum^{\infty}_{n=1} \partial^{2-\gamma}_{\tau}  g_{l}(t_{n})   \xi^{n}.
\end{equation*}
Note that
\begin{equation*}
\begin{split}
\sum^{\infty}_{n=1} \partial^{\alpha}_{\tau} V_{l}^{n} \xi^{n}
=& \sum^{\infty}_{n=1} \frac{1}{\tau^{\alpha}}\sum^{n}_{j=0} \omega^{(\alpha)}_{j} V_{l}^{n-j} \xi^{n}
=  \sum^{\infty}_{j=0} \frac{1}{\tau^{\alpha}}\sum^{\infty}_{n=j} \omega^{(\alpha)}_{j} V_{l}^{n-j} \xi^{n}\\
=& \sum^{\infty}_{j=0} \frac{1}{\tau^{\alpha}}\sum^{\infty}_{n=0} \omega^{(\alpha)}_{j} V_{l}^{n} \xi^{n+j}
=  \frac{1}{\tau^{\alpha}} \sum^{\infty}_{j=0} \omega^{(\alpha)}_{j} \xi^{j} \sum^{\infty}_{n=0}  V_{l}^{n} \xi^{n}
=  \delta^{\alpha}_{\tau} (\xi) \widetilde{V_{l}} (\xi).
\end{split}
\end{equation*}
Similarly, one has
\begin{equation*}
\sum^{\infty}_{n=1} {\partial}_{\tau} t_{n} A\upsilon  \xi^{n} = \delta_{\tau}(\xi) \rho_{1}(\xi) \tau A\upsilon, \quad
\sum^{\infty}_{n=1} {\partial}_{\tau} t^{2}_{n} Ab  \xi^{n} = \delta_{\tau}(\xi) \rho_{2}(\xi) \tau^{2} Ab
\end{equation*}
with $\rho_{j}(\xi)=\sum^{\infty}_{n=1}n^{j} \xi^{n}$, $j=1, 2$ and
\begin{equation*}
\sum^{\infty}_{n=1} \partial^{2-\gamma}_{\tau} g_{l}(t_{n}) \xi^{n}
 = \sum^{\infty}_{n=1} \partial^{2-\gamma}_{\tau}  g_{l}^{n} \xi^{n} = \delta^{2-\gamma}_{\tau}(\xi) \widetilde{g_{l}} (\xi). 
\end{equation*}
It leads to
\begin{equation}\label{SDE2.8}
\begin{aligned}
\widetilde{V_{l}}(\xi) 
& = {\left( \delta^{\alpha}_{\tau}(\xi)- A\right)}^{-1} \delta_{\tau}(\xi) \left(\rho_{1}(\xi) \tau A\upsilon +\frac{\rho_{2}(\xi)}{2} \tau^{2} Ab\right)\\
& \quad + {\left( \delta^{\alpha}_{\tau}(\xi) -A\right)}^{-1} \delta^{2-\gamma}_{\tau}(\xi) \widetilde{g_{l}} (\xi) .
\end{aligned}
\end{equation}
According to Cauchy's integral formula, and the change of variables $\xi=e^{-z\tau}$, and Cauchy's theorem, one has~\cite{JinCorrection2017}
\begin{equation*}\label{DLT}
\begin{split}
V_{l}^{n}
& =\frac{1}{2\pi i}\int_{\Gamma^{\tau}_{\theta,\kappa}} e^{zt_n} {(\delta^{\alpha}_{\tau}(e^{-z\tau})-A)}^{-1} \delta_{\tau}(e^{-z\tau}) \tau \left( \rho_{1}(e^{-z\tau}) \tau A \upsilon +  \frac{\rho_{2}(e^{-z\tau})}{2} \tau^{2} Ab  \right) dz \\
& \quad + \frac{1}{2\pi i}\int_{\Gamma^{\tau}_{\theta,\kappa}} e^{zt_n} {(\delta^{\alpha}_{\tau}(e^{-z\tau})-A)}^{-1} \delta^{2-\gamma}_{\tau}(e^{-z\tau}) \tau  \widetilde{g_{l}}(e^{-z\tau}) dz
\end{split}
\end{equation*}
with $\Gamma^{\tau}_{\theta, \kappa}=\{z\in \Gamma_{\theta, \kappa}: |\Im z|\leq \pi / \tau\}$. 
The proof is completed.
\end{proof}

\subsection{Error estimates}\label{Sec2.2}
We first give some lemmas which will be used in the error estimates.
\begin{lemma}\cite{JinCorrection2017}\label{SDELemma2.5}
Let $\delta_{\tau}(\xi)$ be given in~\eqref{SDE1.11}. Then there exist the positive constants $c_{1},c_{2}$, $c$ and $\theta \in (\pi/2, \theta_{\varepsilon})$ with $\theta_{\varepsilon} \in (\pi/2, \pi),~\forall \varepsilon>0$ such that
\begin{equation*}
\begin{split}
& c_{1}|z|\leq |\delta_{\tau}(e^{-z\tau})| \leq c_{2}|z|, \quad |\delta_{\tau}(e^{-z\tau})-z|\leq c \tau^{2}|z|^{3},\\
& |\delta^{\alpha}_{\tau}(e^{-z\tau})-z^{\alpha}|\leq c \tau^{2}|z|^{2+\alpha}, \quad \delta_{\tau}(e^{-z\tau}) \in \Sigma_{\pi/2+\varepsilon} ~ {\forall} z\in \Gamma^{\tau}_{\theta,\kappa}.
\end{split}
\end{equation*}
\end{lemma}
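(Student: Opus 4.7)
The plan is to verify each of the four claims by analyzing the generating polynomial $\delta(\xi) = \frac{3}{2} - 2\xi + \frac{1}{2}\xi^{2}$ under the substitution $\xi = e^{-w}$ with $w = z\tau$, and to exploit the fact that on $\Gamma_{\theta,\kappa}^{\tau}$ the product $|z\tau|$ is bounded (between $\kappa\tau$ and roughly $\pi/\sin\theta$). First I would derive the local expansion $\delta(e^{-w}) = w + c_{0} w^{3} + O(w^{4})$ for $|w|$ small by Taylor-expanding $e^{-w}$ and substituting; the $O(w^{2})$ term cancels because BDF$2$ is second-order consistent. Rescaling by $\tau^{-1}$ with $w=z\tau$ gives $\delta_{\tau}(e^{-z\tau}) = z + O(\tau^{2}|z|^{3})$ in the regime $|z\tau|\le 1$.

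For the two-sided bound $c_{1}|z| \le |\delta_{\tau}(e^{-z\tau})| \le c_{2}|z|$, I would combine the expansion above (valid for small $|z\tau|$) with a compactness argument for the remaining regime $|z\tau|\in[1,\pi/\sin\theta]$: on that compact subset of the sector the continuous function $|\delta(e^{-w})|/|w|$ is bounded above and below away from zero, provided $\delta(e^{-w})$ does not vanish on $\overline{\Sigma_{\theta}}\cap\{|w|\le \pi/\sin\theta\}$, which follows by inspecting the two roots of $\delta$, namely $\xi=1$ and $\xi=3$, neither of which lies in the image of that set for $\theta$ close enough to $\pi/2$. The bound $|\delta_{\tau}(e^{-z\tau}) - z| \le c\tau^{2}|z|^{3}$ then follows directly from the Taylor estimate in the small-$|z\tau|$ regime, and from the triangle inequality applied to the two-sided bound in the complementary regime (where $\tau^{2}|z|^{3}\gtrsim |z|$).

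For the fractional-order estimate I would factor
\begin{equation*}
\delta_{\tau}^{\alpha}(e^{-z\tau}) - z^{\alpha} = z^{\alpha}\Bigl[\bigl(\delta_{\tau}(e^{-z\tau})/z\bigr)^{\alpha} - 1\Bigr],
\end{equation*}
set $r := \delta_{\tau}(e^{-z\tau})/z - 1$, and use the already-established bound $|r| \le c\tau^{2}|z|^{2}$ together with the elementary estimate $|(1+r)^{\alpha} - 1| \le C_{\alpha}|r|$ valid for $r$ in any compact subset of $\mathbb{C}$ disjoint from the branch cut $(-\infty,-1]$. The disjointness is ensured by the sector property proved next, which keeps $\delta_{\tau}(e^{-z\tau})/z$ in a neighborhood of the positive real axis.

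The final and most delicate claim, $\delta_{\tau}(e^{-z\tau}) \in \Sigma_{\pi/2+\varepsilon}$, encodes the A-stability of BDF$2$. The classical argument shows that $w \mapsto \delta(e^{-w})$ maps the closed left half-plane into itself (equivalently, the BDF$2$ stability region contains $\{\Re z \le 0\}$). By continuity of the map on compacta and openness of $\Sigma_{\pi/2+\varepsilon}$, there exists $\theta_{\varepsilon}>\pi/2$ such that for every $\theta\in(\pi/2,\theta_{\varepsilon})$ the image of $\Sigma_{\theta}\cap\{|w|\le \pi/\sin\theta\}$ under $w\mapsto \delta(e^{-w})$ is contained in $\Sigma_{\pi/2+\varepsilon}$; rescaling by $\tau^{-1}$ preserves the sector. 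I expect this sector-inclusion step to be the main obstacle, since it requires tracking the image of a sector under a nonlinear conformal-type map rather than a pure Taylor estimate, whereas the first three items reduce to Taylor expansion plus compactness.
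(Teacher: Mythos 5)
The paper does not prove this lemma at all: it is quoted verbatim from \cite{JinCorrection2017} (see Lemma 3.3 and Appendix there), so there is no in-paper argument to compare against. Your reconstruction follows essentially the same route as that reference: Taylor expansion of $\delta(e^{-w})=w-\tfrac13 w^3+O(w^4)$ for small $w=z\tau$ (the $w^2$ term indeed cancels by second-order consistency), a compactness/nonvanishing argument on the remaining bounded range of $|z\tau|$ using the roots $\xi=1,3$ of $\delta$, the factorization $\delta_\tau^\alpha-z^\alpha=z^\alpha[(1+r)^\alpha-1]$ with $|r|\le c\tau^2|z|^2$, and $A$-stability plus a perturbation in $\theta$ for the sector inclusion. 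This is correct in structure and would yield a complete proof.

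Two points deserve tightening. First, the stability statement has a sign slip: the relevant fact is $\Re\,\delta(\xi)\ge 0$ for $|\xi|\le 1$, i.e.\ $w\mapsto\delta(e^{-w})$ maps the closed \emph{right} half-plane in $w$ into the closed right half-plane (your parenthetical about the stability region containing $\{\Re z\le 0\}$ is the correct formulation; note also that on the rays of $\Gamma^\tau_{\theta,\kappa}$ one has $\Re w<0$, so $|e^{-w}|>1$ and the perturbation argument is genuinely needed). Second, "continuity on compacta plus openness of $\Sigma_{\pi/2+\varepsilon}$" is not quite enough by itself near $w=0$: a direct computation gives $\delta(e^{-iy})=(1-\cos y)^2+i\sin y(2-\cos y)$, whose argument tends to $\pm\pi/2$ as $y\to 0$, so the image of the $\theta=\pi/2$ contour is \emph{not} uniformly bounded away from the boundary of $\Sigma_{\pi/2+\varepsilon}$; you must split off a neighborhood of the origin and handle it with the expansion $\delta(e^{-w})=w(1+O(w^2))$, which gives $|\arg\delta(e^{-w})|\le\theta+C|w|^2<\pi/2+\varepsilon$ there, reserving the compactness argument for $|w|\ge\epsilon_0$. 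With these repairs (and the routine check that $(\delta_\tau/z)^\alpha z^\alpha=\delta_\tau^\alpha$ causes no branch-cut trouble because $\arg\delta_\tau$ stays in $(-\pi/2-\varepsilon,\pi/2+\varepsilon)$), your plan is sound.
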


\begin{lemma}\label{SDELemma2.6}
Let $\delta_{\tau}(\xi)$ be given in~\eqref{SDE1.11} and $\rho_{1}(\xi)=\sum^{\infty}_{n=1} n \xi^{n}$.
Then there exists a positive constants $c$ such that
\begin{equation*}\label{eq2.12}
\left|  \rho_{1}(e^{-z\tau})  \tau^{2} - z^{-2} \right| \leq c \tau^{2}~~{\rm and}~~
\left|\delta^{2-\gamma}_{\tau}(e^{-z\tau})  \rho_{1}(e^{-z\tau}) \tau^{2} - z^{-\gamma} \right| \leq c \tau^{2}|z|^{2-\gamma}~~ \forall z\in \Gamma^{\tau}_{\theta,\kappa},
\end{equation*}
where $\theta\in (\pi/2,\pi)$ is sufficiently close to $\pi/2$.
\end{lemma}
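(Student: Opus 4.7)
The plan is to prove the two bounds separately, with the second following quickly from the first together with Lemma \ref{SDELemma2.5}.

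For the first estimate, I would use the closed-form $\rho_1(\xi) = \xi/(1-\xi)^2$ and rewrite
\[
\rho_1(e^{-z\tau})\tau^2 - z^{-2} = \tau^2\bigl[ f(z\tau) \bigr], \qquad f(w) := \frac{e^{-w}}{(1-e^{-w})^2} - \frac{1}{w^2}.
\]
A short Taylor expansion of $1-e^{-w}=w-w^2/2+w^3/6-\cdots$ gives $e^{-w}/(1-e^{-w})^2 = 1/w^2 - 1/12 + O(w)$ near $w=0$, so $f$ has a removable singularity at $0$. The only other singularities of $e^{-w}/(1-e^{-w})^2$ occur at $w=2\pi i k$, $k\neq 0$. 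On the contour $\Gamma^{\tau}_{\theta,\kappa}$ we have $|\Im z|\leq \pi/\tau$ and $|z|\leq \pi/(\tau\sin\theta)$, hence $z\tau$ lies in the compact set $\{w : |\Im w|\leq \pi,\ |w|\leq \pi/\sin\theta\}$, which avoids all these poles. Therefore $f$ is bounded there by some constant $c$, yielding $|\rho_1(e^{-z\tau})\tau^2 - z^{-2}| \leq c\tau^2$.

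For the second estimate, I would split via the standard trick of adding and subtracting:
\[
\delta^{2-\gamma}_{\tau}(e^{-z\tau})\rho_{1}(e^{-z\tau})\tau^2 - z^{-\gamma}
= \delta^{2-\gamma}_{\tau}(e^{-z\tau})\bigl(\rho_{1}(e^{-z\tau})\tau^2 - z^{-2}\bigr) + z^{-2}\bigl(\delta^{2-\gamma}_{\tau}(e^{-z\tau})-z^{2-\gamma}\bigr).
\]
For the first term I apply Lemma \ref{SDELemma2.5}, which gives $|\delta_\tau(e^{-z\tau})|\leq c_2|z|$ and hence $|\delta^{2-\gamma}_\tau(e^{-z\tau})|\leq c|z|^{2-\gamma}$, combined with the first estimate, to bound it by $c\tau^2|z|^{2-\gamma}$. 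For the second term I use the bound $|\delta^\beta_\tau(e^{-z\tau})-z^\beta|\leq c\tau^2|z|^{2+\beta}$ from Lemma \ref{SDELemma2.5} with $\beta = 2-\gamma$, yielding $|z|^{-2}\cdot c\tau^2|z|^{4-\gamma} = c\tau^2|z|^{2-\gamma}$. Adding the two contributions completes the proof.

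The main obstacle is the first estimate: I must be certain that $f(w)$ remains bounded on the whole relevant compact region. The crucial structural input is the constraint $|\Im z|\leq \pi/\tau$ in the definition of $\Gamma^{\tau}_{\theta,\kappa}$, which pushes $z\tau$ away from the nonzero poles $2\pi i k$ of $e^{-w}/(1-e^{-w})^2$; together with the removability of the singularity at the origin, this yields uniform boundedness. Everything else reduces to algebraic manipulation and direct application of Lemma \ref{SDELemma2.5}.
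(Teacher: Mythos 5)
Your argument is correct and matches the paper's: the second estimate is handled by exactly the same add-and-subtract decomposition (the paper's $J_1+J_2$) combined with Lemma \ref{SDELemma2.5}. The only difference is that for the first estimate the paper simply cites Lemma 3.2 of \cite{ShiHighorder2023}, whereas you supply the underlying argument directly via the closed form $\rho_1(\xi)=\xi/(1-\xi)^2$ and the boundedness of $e^{-w}/(1-e^{-w})^2-w^{-2}$ on the compact region $\{|\Im w|\le\pi,\ |w|\le\pi/\sin\theta\}$, correctly using the constraint $|\Im z|\le\pi/\tau$ to avoid the poles at $2\pi i k$, $k\neq 0$.
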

\begin{proof}
For a rigorous proof of the first estimate the reader is referred to Lemma 3.2 in  \cite{ShiHighorder2023}.
Here, we mainly prove the second one.  Let
\begin{equation*}
\delta^{2-\gamma}_{\tau}(e^{-z\tau}) \rho_{1}(e^{-z\tau})  \tau^{2} - z^{-\gamma}= J_{1} +J_{2}
\end{equation*}
with
\begin{equation*}
J_{1}=\delta^{2-\gamma}_{\tau}(e^{-z\tau}) \rho_{1}(e^{-z\tau}) \tau^{2}-\delta^{2-\gamma}_{\tau}(e^{-z\tau})z^{-2} \quad {\rm and} \quad J_{2}=\delta^{2-\gamma}_{\tau}(e^{-z\tau})z^{-2}-z^{-\gamma}.
\end{equation*}
According to Lemma \ref{SDELemma2.5}, we have
\begin{equation*}
\left| J_{1} \right| \leq \left| \delta^{2-\gamma}_{\tau}(e^{-z\tau}) \right| \left| \rho_{1}(e^{-z\tau}) \tau^{2}- z^{-2} \right|
\leq c \tau ^{2}\left| z \right|^{2-\gamma}
\end{equation*}
and
\begin{equation*}
\left| J_{2} \right| \leq \left| \delta^{2-\gamma}_{\tau}(e^{-z\tau})-z^{2-\gamma} \right| \left| z^{-2}  \right| \leq c\tau ^{2} \left| z \right|^{2-\gamma }.
\end{equation*}
By the triangle inequality, the desired result is obtained.
\end{proof}

\begin{lemma}\label{SDELemma2.7}
Let $\delta_{\tau}(\xi)$ be given by~\eqref{SDE1.11} and $\rho_{j}(\xi)=\sum^{\infty}_{n=1}n^{j} \xi^{n}$ with $j=1, 2$. Then there exists a positive constants $c$ such that
\begin{equation*}
\left\| {\left(\delta^{\alpha}_{\tau}(e^{-z\tau}) - A\right)}^{-1} \delta_{\tau}(e^{-z\tau}) \frac{\rho_{j}(e^{-z\tau})}{\Gamma(j+1)} \tau^{j+1} A - {(z^{\alpha}-A)}^{-1} z^{-j} A \right\| \leq  c \tau^{2}|z|^{2-j}
\end{equation*}
and
\begin{equation*}
\left\| {\left(\delta^{\alpha}_{\tau}(e^{-z\tau}) - A\right)}^{-1} \delta^{2-\gamma}_{\tau}(e^{-z\tau}) \rho_{1}(e^{-z\tau}) \tau^{2} - {(z^{\alpha}-A)}^{-1} z^{-\gamma} \right\|
\leq  c \tau^{2}|z|^{2-\alpha-\gamma}.
\end{equation*}
\end{lemma}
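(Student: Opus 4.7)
The plan is to treat both estimates by splitting the difference into two telescoping pieces and invoking (i) the resolvent identity to compare the two resolvents, and (ii) Lemma~\ref{SDELemma2.6} (together with its $j=2$ analogue) to compare the scalar multipliers. Throughout I work on $\Gamma^\tau_{\theta,\kappa}$, where $\tau|z|$ is bounded.

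\textbf{Step 1 (first estimate).} Set
\begin{equation*}
B_1 = (\delta^\alpha_\tau(e^{-z\tau}) - A)^{-1},\quad B_2 = \delta_\tau(e^{-z\tau}) \frac{\rho_j(e^{-z\tau})}{\Gamma(j+1)}\tau^{j+1},\quad C_1 = (z^\alpha - A)^{-1},\quad C_2 = z^{-j},
\end{equation*}
so the quantity to bound is $B_1 B_2 A - C_1 C_2 A$. Since $B_2, C_2$ are scalars they commute with $A$, and I would split
\begin{equation*}
B_1 B_2 A - C_1 C_2 A = (B_1-C_1)A\cdot B_2 \;+\; C_1 A\cdot(B_2-C_2).
\end{equation*}
For the first piece, apply the resolvent identity $B_1 - C_1 = (z^\alpha - \delta^\alpha_\tau(e^{-z\tau}))\,B_1 C_1$ and then use $C_1 A = z^\alpha C_1 - I$ to rewrite $(B_1-C_1)A = (z^\alpha - \delta^\alpha_\tau(e^{-z\tau}))\,B_1\,(z^\alpha C_1 - I)$. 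Lemma~\ref{SDELemma2.5} gives $|\delta^\alpha_\tau(e^{-z\tau}) - z^\alpha|\le c\tau^2|z|^{\alpha+2}$ and $\|B_1\|\le c|z|^{-\alpha}$, while $\|z^\alpha C_1 - I\|\le c$, yielding $\|(B_1 - C_1)A\|\le c\tau^2|z|^2$. Since $|B_2|\le c|z|^{-j}$ (from Lemma~\ref{SDELemma2.6} plus $|B_2 - C_2|\le c\tau^2|z|^{2-j}$), the first piece is of order $c\tau^2|z|^{2-j}$.

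For the second piece, $\|C_1 A\|\le c$ is immediate, and $|B_2 - C_2|\le c\tau^2|z|^{2-j}$ is exactly Lemma~\ref{SDELemma2.6} (first assertion) for $j=1$ after multiplying by $\delta_\tau(e^{-z\tau})\approx z$ and invoking Lemma~\ref{SDELemma2.5} to replace $\delta_\tau(e^{-z\tau})z^{-2}$ by $z^{-1}$; for $j=2$ the analogue is obtained verbatim as in Lemma 3.2 of \cite{ShiHighorder2023}, noting that $\rho_2(e^{-z\tau})\tau^3$ is a second-order accurate approximation of $2z^{-3}$.

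\textbf{Step 2 (second estimate).} No $A$ sits on the right, so with $B_2 = \delta^{2-\gamma}_\tau(e^{-z\tau})\rho_1(e^{-z\tau})\tau^2$ and $C_2 = z^{-\gamma}$ I would write
\begin{equation*}
B_1 B_2 - C_1 C_2 = (B_1 - C_1)B_2 + C_1(B_2 - C_2).
\end{equation*}
The resolvent difference bound $\|B_1 - C_1\|\le c\tau^2|z|^{2-\alpha}$ (Step~1 without the trailing $A$) combines with $|B_2|\le c|z|^{-\gamma}$ (from the second part of Lemma~\ref{SDELemma2.6}) to dominate the first term by $c\tau^2|z|^{2-\alpha-\gamma}$. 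The second term is bounded by $\|C_1\|\cdot|B_2-C_2|\le c|z|^{-\alpha}\cdot c\tau^2|z|^{2-\gamma}$, where the scalar estimate is exactly the second assertion of Lemma~\ref{SDELemma2.6}. Both pieces are $O(\tau^2|z|^{2-\alpha-\gamma})$, finishing the proof.

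\textbf{Main obstacle.} The only genuinely delicate point is handling the unbounded operator $A$ on the right of the first estimate: one must route $A$ either through the algebraic identity $A(z^\alpha-A)^{-1} = z^\alpha(z^\alpha-A)^{-1} - I$ and its discrete analogue, or absorb it into the resolvent before taking norms; done naively one loses an unwanted $|z|^\alpha$ factor and the bound degrades. Everything else is an elementary triangle-inequality bookkeeping exercise once Lemmas~\ref{SDELemma2.5} and \ref{SDELemma2.6} are in hand.
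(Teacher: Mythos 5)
Your proof is correct and follows essentially the same route as the paper: a triangle-inequality splitting into a resolvent-difference piece and a scalar-multiplier-difference piece, controlled by the resolvent identity together with Lemmas~\ref{SDELemma2.5} and~\ref{SDELemma2.6} (your grouping $(B_1-C_1)B_2 + C_1(B_2-C_2)$ is the mirror image of the paper's $B_1(B_2-C_2)+(B_1-C_1)C_2$, which makes no difference to the bounds). In fact you give more detail than the paper on the first estimate, which it delegates entirely to a citation; your handling of the trailing $A$ via $C_1A=z^{\alpha}C_1-I$ is a sound way to avoid the spurious $|z|^{\alpha}$ loss you flag.
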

\begin{proof}
The first inequality of this lemma can be performed similarly as Lemmas 3.5 and 3.6 in  \cite{Shi2023highorder}. For the second one, let
\begin{equation*}
{\left( \delta^{\alpha}_{\tau}(e^{-z\tau}) -A\right)}^{-1} \delta^{2-\gamma}_{\tau}(e^{-z\tau})  \rho_{1}(e^{-z\tau})  \tau^{2} - {(z^{\alpha}-A)}^{-1} z^{-\gamma}=I+II
\end{equation*}
with
\begin{equation*}
\begin{split}
I=& {\left( \delta^{\alpha}_{\tau}(e^{-z\tau}) -A\right)}^{-1}[\delta^{2-\gamma}_{\tau}(e^{-z\tau}) \rho_{1}(e^{-z\tau}) \tau^{2}-z^{-\gamma } ], \\
II=& [{\left( \delta^{\alpha}_{\tau}(e^{-z\tau}) -A\right)}^{-1}- {\left( z^{\alpha } -A \right)}^{-1}  ]z^{-\gamma }.
\end{split}
\end{equation*}
The resolvent estimate~\eqref{SDE1.4} and Lemma \ref{SDELemma2.5} imply directly
\begin{equation}\label{SDE2.9}
\left\| {\left( \delta^{\alpha}_{\tau}(e^{-z\tau}) -A\right)}^{-1} \right\|  \leq \left| z \right|^{-\alpha }.
\end{equation}
From Lemma \ref{SDELemma2.6} and~\eqref{SDE2.9}, we obtain
\begin{equation*}
\left\| I \right\| \leq  c\tau ^{2} \left| z \right|^{2-\alpha -\gamma }.
\end{equation*}
Using Lemma \ref{SDELemma2.5}, \eqref{SDE2.9} and the identity
\begin{equation*}\label{mm3.3}
{\left( \delta^{\alpha}_{\tau}(e^{-z\tau}) -A\right)}^{-1}- {\left( z^{\alpha } -A \right)} ^{-1}=\left( z^{\alpha }- \delta^{\alpha}_{\tau}(e^{-z\tau}) \right) {\left( \delta^{\alpha}_{\tau}(e^{-z\tau})-A \right)}^{-1} {\left( z^{\alpha } -A \right)}^{-1},
\end{equation*}
we estimate II as following
\begin{equation*}
\left\| II \right\| \leq c\tau ^{2}\left| z \right| ^{2+\alpha }c\left| z \right| ^{-\alpha }c\left| z \right|^{-\alpha } \left| z \right| ^{-\gamma }\leq c\tau ^{2}  \left| z \right| ^{2-\alpha -\gamma}.
\end{equation*}
By the triangle inequality, the desired result is obtained.
\end{proof}

We now turn to our main theorems.
\begin{theorem}\label{SDETheorem2.8}
Let $V_{l}(t_{n})$ and $V_{l}^{n}$ be the solutions of~\eqref{SDE2.5} and~\eqref{SDE2.7}, respectively.
Let $\upsilon = b = 0$. 
Then for any small $\varepsilon>0$
\begin{equation*}
\mathbb{E} \left\|V_{l}(t_{n})-V_{l}^{n}\right\|^{2}
\leq c \sum_{j=1}^{\infty}\left( \mu_{j}^{l} \right)^{2}  \tau^{2\alpha+2\gamma-1-2\varepsilon}  t_{n}^{2\varepsilon} \quad {\rm for} ~ 1< \alpha+\gamma \leq \frac{5}{2},
\end{equation*}
and
\begin{equation*}
\mathbb{E} \left\|V_{l}(t_{n})-V_{l}^{n}\right\|^{2}
\leq c  \sum_{j=1}^{\infty }\left( \mu_{j}^{l} \right)^{2}  \tau^{4} t_{n}^{2\alpha +2\gamma-5} \quad  {\rm for} ~ \frac{5}{2} < \alpha+\gamma <  3.
\end{equation*}
\end{theorem}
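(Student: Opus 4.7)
The plan is to represent both $V_l(t_n)$ and $V_l^n$ as stochastic integrals with deterministic operator-valued kernels, apply It\^o isometry, and bound the squared kernel difference via contour-integral estimates. Using
\begin{equation*}
g_l(t_m) = \sum_{j=1}^{\infty}\int_0^{t_m}(t_m - s)\,\sigma_j^l(s)\varphi_j\, d\beta_j(s),
\end{equation*}
and swapping the discrete sum with the stochastic integral via the stochastic Fubini theorem, I would rewrite \eqref{SDE2.5} and the discrete representation \eqref{SDE2.7} in the form $V_l(t_n)-V_l^n = \sum_j \int_0^{t_n}\bigl[E(\gamma, t_n - s) - F_\tau^{(n)}(s)\bigr]\sigma_j^l(s)\varphi_j\, d\beta_j(s)$, where the discrete kernel has the contour representation
\begin{equation*}
F_\tau^{(n)}(s) = \frac{1}{2\pi i}\int_{\Gamma^{\tau}_{\theta,\kappa}} e^{zt_n}\bigl(\delta^{\alpha}_\tau(e^{-z\tau}) - A\bigr)^{-1}\delta^{2-\gamma}_\tau(e^{-z\tau})\cdot \tau\!\!\sum_{m:\, t_m > s}\!\!e^{-zt_m}(t_m - s)\, dz,
\end{equation*}
which at a grid point $s = t_{m_0}$ reduces to an integrand involving $\tau^2 e^{-zt_{m_0}}\rho_1(e^{-z\tau})$.

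By the orthogonality of $\{\varphi_j\}$ and It\^o isometry (Proposition~\ref{itoIs}), together with $|\sigma_j^l(s)|\le\mu_j^l$,
\begin{equation*}
\mathbb{E}\|V_l(t_n) - V_l^n\|^2 \leq \sum_{j=1}^\infty (\mu_j^l)^2 \int_0^{t_n}\|E(\gamma, t_n - s) - F_\tau^{(n)}(s)\|^2\, ds.
\end{equation*}
For the integrand I would split the kernel difference into (i) the tail contribution from the part of the continuous Hankel contour with $|z|\gtrsim 1/\tau$ (that is, $\Gamma_{\theta,\kappa}\setminus\Gamma^\tau_{\theta,\kappa}$), bounded using the resolvent estimate $\|(z^\alpha - A)^{-1}z^{-\gamma}\|\lesssim|z|^{-\alpha-\gamma}$, which contributes at most $c\tau^{\alpha+\gamma-1}$; and (ii) the on-contour discrepancy on $\Gamma^\tau_{\theta,\kappa}$, controlled by the symbol bound $c\tau^2|z|^{2-\alpha-\gamma}$ supplied by Lemma~\ref{SDELemma2.7}. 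With the standard choice $\kappa \sim 1/(t_n - s)$, these combine to give the pointwise estimate
\begin{equation*}
\|E(\gamma, t_n-s) - F_\tau^{(n)}(s)\|\leq c\min\!\bigl(\tau^{\alpha+\gamma-1},\ \tau^{2}(t_n - s)^{\alpha+\gamma-3}\bigr).
\end{equation*}

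Squaring and integrating this pointwise bound in $s$ produces the two regimes of the theorem. For $\tfrac{5}{2}<\alpha+\gamma<3$, the bulk integral $\int_0^{t_n}\tau^4(t_n - s)^{2\alpha+2\gamma-6}\, ds$ converges and is dominated by the upper limit, yielding $c\tau^4 t_n^{2\alpha+2\gamma-5}$. For $1<\alpha+\gamma\le\tfrac{5}{2}$, the same integrand is not integrable at $s=t_n$, and I would interpolate the two pointwise bounds via a H\"older-type split using a small exponent $\varepsilon>0$---mirroring the $\beta$-splitting in the proof of Theorem~\ref{SDETheorem2.2}---to obtain the factor $\tau^{2\alpha+2\gamma-1-2\varepsilon}t_n^{2\varepsilon}$, with $\varepsilon$ needed in particular to regularize the logarithmic endpoint at $\alpha+\gamma=\tfrac{5}{2}$. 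The main obstacle will be the non-grid-point dependence of $F_\tau^{(n)}(s)$: within each subinterval $[t_k, t_{k+1})$ the kernel is affine in $s$, producing an additional correction in $\tau\sum_{m}e^{-zt_m}(t_m - s)$ compared with its grid-point value $\tau^2 e^{-zt_{m_0}}\rho_1(e^{-z\tau})$, and this correction must be shown to be absorbed into the Lemma~\ref{SDELemma2.7} bound in both regimes so that no parasitic rate creeps in.
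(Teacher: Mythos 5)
Your proposal follows essentially the same route as the paper: It\^{o} isometry reduces the error to $\sum_{j}(\mu_j^l)^2\int_0^{t_n}\|((\mathscr{E}-\mathscr{E}_\tau)\ast t)(t_n-s)\|^2\,ds$, the kernel difference is split into the contour tail on $\Gamma_{\theta,\kappa}\setminus\Gamma^{\tau}_{\theta,\kappa}$ plus the on-contour discrepancy controlled by Lemma~\ref{SDELemma2.7}, and the two regimes come from squaring either the bound $c\tau^{2}(t_n-s)^{\alpha+\gamma-3}$ or its interpolation $c\tau^{\alpha+\gamma-\frac{1}{2}-\varepsilon}(t_n-s)^{-\frac{1}{2}+\varepsilon}$. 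The non-grid-point obstacle you flag at the end is precisely what the paper resolves: it Taylor-expands both $(\mathscr{E}\ast t)$ and $(\mathscr{E}_\tau\ast t)$ to second order at the nearest grid point $t_n$, so that only grid-point quantities (covered by Lemma~\ref{SDELemma2.7} and its analogue for $(\mathscr{E}-\mathscr{E}_\tau)\ast 1$) and remainder terms bounded via $\|\mathscr{E}(s)\|\le cs^{\alpha+\gamma-3}$ and $\|\mathscr{E}^n_\tau\|\le c\tau t_n^{\alpha+\gamma-3}$ remain.
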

\begin{proof}
By~\eqref{SDE2.5}, we obtain
\begin{equation}\label{SDE2.10}
\begin{split}
V_{l}(t_{n})
= & \left( E(\gamma,t) \ast \frac{d W_{l}(t)}{dt} \right) \left( t_{n}  \right)
= \left(\left(\mathscr{E}(t)\ast t  \right) \ast  \frac{d W_{l}(t)}{dt}   \right)(t_{n}),
\end{split}
\end{equation}
where $\mathscr{E}(t)$ is defined by \eqref{SDE1.8}.

From~\eqref{SDE2.8} and $g_{l}^{n} = t_{n} \ast \frac{d W_{l} (t)}{dt}$, it yields
\begin{equation*}
\begin{split}
\widetilde{V_{l}}(\xi)
&=\left( \delta^{\alpha}_{\tau}(\xi) -A\right)^{-1} \delta^{2-\gamma}_{\tau}(\xi) \widetilde{g_{l}}(\xi) = \widetilde{\mathscr{E_{\tau}}}(\xi)\widetilde{g_{l}}(\xi)
=\sum^{\infty}_{n=0}\mathscr{E}^{n}_{\tau}\xi^{n}\sum^{\infty}_{j=0} g_{l}^{j} \xi^{j}\\
&=\sum^{\infty}_{n=0}\sum^{\infty}_{j=0}\mathscr{E}^{n}_{\tau} g_{l}^{j} \xi^{n+j}=\sum^{\infty}_{j=0}\sum^{\infty}_{n=j}\mathscr{E}^{n-j}_{\tau} g_{l}^{j} \xi^{n}
=\sum^{\infty}_{n=0}\sum^{n}_{j=0}\mathscr{E}^{n-j}_{\tau} g_{l}^{j} \xi^{n}=\sum^{\infty}_{n=0}V_{l}^n\xi^{n}
\end{split}
\end{equation*}
with
\begin{equation*}
V_{l}^{n}=\sum^{n}_{j=0}\mathscr{E}^{n-j}_{\tau} g_{l}^{j} := \sum^{n}_{j=0}\mathscr{E}^{n-j}_{\tau} g_{l}(t_{j}).
\end{equation*}
Here $\sum^{\infty}_{n=0}\mathscr{E}^{n}_{\tau}\xi^{n}=\widetilde{\mathscr{E_{\tau}}}(\xi)=\left( \delta^{\alpha}_{\tau}(\xi) -A\right)^{-1} \delta^{2-\gamma}_{\tau}(\xi)$.
From  the Cauchy's integral formula and the change of variables $\xi=e^{-z\tau}$, we obtain the representation of the $\mathscr{E}^{n}_{\tau}$ as following 
 \begin{equation*}
\mathscr{E}^{n}_{\tau}=\frac{1}{2\pi i}\int_{|\xi|=\rho}{\xi^{-n-1}\widetilde{\mathscr{E_{\tau}}}(\xi)}d\xi
=\frac{\tau}{2\pi i}\int_{\Gamma^{\tau}_{\theta,\kappa}} {e^{zt_n}\left( \delta^{\alpha}_{\tau}(e^{-z\tau}) -A\right)^{-1} \delta^{2-\gamma}_{\tau}(e^{-z\tau}) }dz,
\end{equation*}
where $\theta\in (\pi/2,\pi)$ is sufficiently close to $\pi/2$ and $\kappa=t_{n}^{-1}$ in~\eqref{SDE1.9}.

According to Lemma \ref{SDELemma2.5}, \eqref{SDE2.9} and $\tau t^{-1}_{n} = \frac{1}{n}\leq 1$, there exists
\begin{equation}\label{SDE2.11}
\begin{aligned}
\|\mathscr{E}^{n}_{\tau}\| 
& \leq c \tau \left( \int^{\frac{\pi}{\tau\sin\theta}}_{\kappa} e^{rt_{n}\cos\theta} r^{2-\alpha-\gamma}dr 
         +\int^{\theta}_{-\theta}e^{\kappa t_{n}\cos\psi} \kappa^{3-\alpha-\gamma}  d\psi\right) \\
& \leq c\tau t_{n}^{\alpha+\gamma-3}.
\end{aligned}
\end{equation}
Let $ \mathscr{E}_{\tau}(t)=\sum^{\infty}_{n=0}\mathscr{E}^{n}_{\tau}\delta_{t_{n}}(t)$, with $\delta_{t_{n}}$ being the Dirac delta function at $t_{n}$.
Then
\begin{equation}\label{SDE2.12}
(\mathscr{E}_{\tau}(t)\ast g_{l}(t))(t_{n})
 = \left(\sum^{\infty}_{j=0}\mathscr{E}^{j}_{\tau}\delta_{t_{j}}(t) \ast g_{l}(t) \right)(t_{n})
 = \sum^{n}_{j=0}\mathscr{E}^{n-j}_{\tau} g_{l}(t_{j})=V^{n}_{l}.
\end{equation}
Moreover, using the above equation, there exists
\begin{equation*}
\begin{split}
  \widetilde{(\mathscr{E}_{\tau}\ast t)}(\xi)
& = \sum^{\infty}_{n=0} \sum^{n}_{j=0}\mathscr{E}^{n-j}_{\tau}t_{j}\xi^{n}  =\sum^{\infty}_{j=0} \sum^{\infty}_{n=j}\mathscr{E}^{n-j}_{\tau}t_{j}\xi^{n}
  =\sum^{\infty}_{j=0} \sum^{\infty}_{n=0}\mathscr{E}^{n}_{\tau}t_{j}\xi^{n+j}\\
& =\sum^{\infty}_{n=0}\mathscr{E}^{n}_{\tau}\xi^{n}\sum^{\infty}_{j=0}t_{j}\xi^{j}  =\widetilde{\mathscr{E_{\tau}}}(\xi) \tau \sum^{\infty}_{j=0}j \xi^{j}
  =\widetilde{\mathscr{E}_{\tau}}(\xi) \tau \rho_{1}(\xi).
\end{split}
\end{equation*}
According to \eqref{SDE1.2}, \eqref{SDE2.10}, \eqref{SDE2.12} and It\^{o} isometry property, we have
\begin{equation*}\label{mm3.8}
\begin{split}
\mathbb{E} \left\|V_{l}(t_{n})-V_{l}^{n}\right\|^{2}
= & \mathbb{E} \left\| \left( \left[ \left( \mathscr{E}-\mathscr{E}_{\tau }  \right)\ast t \right] \ast  \frac{\partial  W_{l} \left( t \right) }{\partial t} \right) \left( t_{n}  \right) \right\|^{2} \\
= & \mathbb{E} \left\| \sum_{j=1}^{\infty }\int_{0}^{t_{n} } \left[ \left( \left( \mathscr{E}-\mathscr{E}_{\tau }  \right)\ast t \right) \left( t_{n}-s  \right) \right]  \sigma _{j}^{l}\left( s \right)  \varphi _{j} d \beta_{j}\left( s \right) \right\|^{2} \\
= & \sum_{j=1}^{\infty } \int_{0}^{t_{n} } \left\| \left[ \left( \left( \mathscr{E}-\mathscr{E}_{\tau } \right)\ast t \right) \left( t_{n}-s \right) \right] \sigma _{j}^{l}\left( s \right) \varphi_{j} \right\|^{2} ds \\
\leq & \sum_{j=1}^{\infty } \left( \mu _{j}^{l}  \right)^{2} \int_{0}^{t_{n} }\left\| \left( \left( \mathscr{E}-\mathscr{E}_{\tau } \right)\ast t  \right) \left( t_{n}-s  \right)  \right\|^{2}ds.
\end{split}
\end{equation*}

Next, we prove the following inequality for any $t\in \left( t_{n-1},t_{n} \right)$ 
\begin{equation*}\label{mm3.9}
\left\| \left( \left( \mathscr{E}-\mathscr{E}_{\tau } \right)\ast t \right) \left( t  \right)  \right\| \leq \left\{
\begin{split}
& c\tau ^{2}t^{\alpha +\gamma -3},  \qquad \quad \ \ \frac{5}{2} < \alpha+\gamma <  3, \\
& c \tau^{\alpha+\gamma-\frac{1}{2}-\varepsilon}t^{-\frac{1}{2}+\varepsilon}, \quad  1< \alpha+\gamma \leq \frac{5}{2}.
\end{split}\right.
\end{equation*}
By Taylor series expansion of $\mathscr{E} \left( t \right) $ at $t=t_{n} $, we get
\begin{equation*}
\begin{split}
\left(\mathscr{E} \left( t \right)\ast t \right)(t)
=  \left(\mathscr{E} \left( t \right)\ast t\right) ( t_{n} ) + \left( t-t_{n} \right) \left(\mathscr{E} \left( t \right)\ast 1\right) ( t_{n} )
 + \int _{t}^{t_{n} } \left( s-t \right)  \mathscr{E}\left( s \right)  ds,
\end{split}
\end{equation*}
which also holds for the convolution $\left( \mathscr{E}_{\tau}\ast t \right)\left( t \right) $.
Now, we estimate 
\begin{equation*}
\begin{split}
\left\| \left( \left( \mathscr{E}-\mathscr{E}_{\tau } \right)\ast t \right) \left( t_{n}  \right)  \right\|
\leq \left\| J_{1}  \right\| +\left\| J_{2}  \right\|
\end{split}
\end{equation*}
with
\begin{equation*}
\begin{split}
J_{1}=&\frac{1}{2\pi i}\int _{\Gamma _{\theta ,k}\setminus \Gamma _{\theta ,k}^{\tau}} e^{zt_{n}}\left( z^{\alpha } -A \right)^{-1}\frac{1}{z^{\gamma } } dz, \\
J_{2}=&\frac{1}{2\pi i}\int _{\Gamma _{\theta ,k}^{\tau}} e^{zt_{n}}\left[ \left( z^{\alpha } -A \right)^{-1} z^{-\gamma} - \left( \delta^{\alpha}_{\tau}(e^{-z\tau}) -A\right)^{-1} \delta^{2-\gamma}_{\tau}(e^{-z\tau}) \rho _{1}\left( e^{-z\tau} \right) \tau ^{2} \right]  dz.
\end{split}
\end{equation*}
According to the triangle inequality and Lemma \ref{SDELemma2.7}, it yields
\begin{equation*}
\begin{split}
\left\| J_{1}  \right\| \le& c\int_{\frac{\pi }{\tau \sin \theta } }^{+\infty }e^{rt_{n}\cos \theta }r^{-\left( \alpha +\gamma  \right) }dr \leq c\tau ^{2} \int_{\frac{\pi }{\tau \sin \theta } }^{+\infty }e^{rt_{n}\cos \theta }r^{2-\left( \alpha +\gamma  \right) }dr \\
\le& c\tau ^{2}t_{n}^{\alpha +\gamma -3}  \int_{\frac{t_{n} \pi }{\tau \sin \theta } }^{+\infty }e^{s\cos \theta }s^{2-\left( \alpha +\gamma  \right) }ds \leq c\tau ^{2}t_{n}^{\alpha +\gamma -3}
\end{split}
\end{equation*}
and
\begin{equation*}
\begin{split}
\left\| J_{2}  \right\|
\leq & c\int_{k}^{\frac{\pi }{\tau \sin \theta } }e^{rt_{n}\cos \theta  } \tau ^{2}r^{2-\gamma -\alpha } dr + c \int_{-\theta }^{\theta }e^{kt_{n}\cos \psi  }  \tau ^{2}k^{3-\gamma -\alpha} d\psi
\leq c\tau ^{2}t_{n}^{\alpha +\gamma -3}.
\end{split}
\end{equation*}
Then, we have
\begin{equation}\label{SDE2.13}
\left\| \left( \left( \mathscr{E}-\mathscr{E}_{\tau } \right)\ast t \right) \left( t_{n}  \right)  \right\|
\leq  c\tau ^{2}t_{n}^{\alpha +\gamma -3}.
\end{equation}
Similarly, we obtain
\begin{equation}\label{SDE2.14}
\left\| \left( t-t_{n} \right) \left(\left( \mathscr{E}-\mathscr{E}_{\tau } \right)\ast 1\right) ( t_{n} ) \right\|  \leq  c\tau ^{2}t_{n}^{\alpha +\gamma -3}.
\end{equation}

According to~\eqref{SDE1.8} and~\eqref{SDE1.4}, it leads to
\begin{equation*}
\left\| \mathscr{E}(t) \right\| \leq  c \int_{\Gamma_{\theta,\kappa}} \left| e^{zt} \right| |z|^{2-\alpha-\gamma} |dz| \leq c t^{\alpha+\gamma-3}.
\end{equation*}
Moreover, one has
\begin{equation}\label{SDE2.15}
\begin{split}
\left\| \int _{t}^{t_{n} } \left( s-t \right)  \mathscr{E}\left( s \right)  ds \right\|
\leq c \int_{t}^{t_{n}}\left( s-t \right)s^{\alpha +\gamma -3}ds.
\end{split}
\end{equation}
By the definition of $\mathscr{E}_{\tau } \left( t \right) = \sum_{n=0}^{\infty } \mathscr{E}_{\tau }^{n} \delta_{t_{n} }\left( t \right)$ and~\eqref{SDE2.11}, we deduce
\begin{equation}\label{SDE2.16}
\begin{split}
\left\| \int _{t}^{t_{n} } \left( s-t \right) \mathscr{E}_{\tau}\left( s \right) ds \right\|
\leq (t_{n}-t) \left\| \mathscr{E}^{n}_{\tau} \right\|
\leq c\tau^{2} t_{n}^{\alpha+\gamma-3}.
\end{split}
\end{equation}

Case I:\@ $\alpha+\gamma>5/2$. From~\eqref{SDE2.13}-\eqref{SDE2.16}, we get
\begin{equation*}
\left\| \left( \left( \mathscr{E}-\mathscr{E}_{\tau } \right)\ast t \right) \left( t  \right)  \right\|
\leq  c\tau ^{2}t^{\alpha +\gamma -3}
\end{equation*}
and
\begin{equation*}
\int_{0}^{t_{n} }\left\| \left( \left( \mathscr{E}-\mathscr{E}_{\tau } \right)\ast t  \right) (s) \right\|^{2}ds
\leq c \tau^{4} \int_{0}^{t_{n} }  s^{2\alpha +2\gamma -6}  ds
\leq c\tau ^{4}  t_{n}^{2\alpha +2\gamma -5}.
\end{equation*}

Case II:\@ $1 < \alpha+\gamma \leq 5/2$. For small $\varepsilon$, 
according to~\eqref{SDE2.13}-\eqref{SDE2.16}, it yields
\begin{equation*}
\left\| \left( \left( \mathscr{E}-\mathscr{E}_{\tau} \right)\ast t \right) \left( t  \right)  \right\|
\leq  c \tau^{\alpha+\gamma-\frac{1}{2}-\varepsilon}t^{-\frac{1}{2}+\varepsilon}
\end{equation*}
and
\begin{equation*}
\int_{0}^{t_{n} }\left\| \left( \left( \mathscr{E}-\mathscr{E}_{\tau } \right)\ast t  \right) (s)  \right\|^{2}ds
\leq c\tau^{2\alpha+2\gamma-1-2\varepsilon} \int_{0}^{t_{n} }  s^{-1+2\varepsilon}  ds
\leq c\tau^{2\alpha+2\gamma-1-2\varepsilon}  t_{n}^{2\varepsilon}.
\end{equation*}
The proof is completed.
\end{proof}

\begin{theorem}\label{SDETheorem2.9}
Let $V_{l}(t_{n})$ and $V_{l}^{n}$ be the solutions of~\eqref{SDE2.5} and~\eqref{SDE2.7}, respectively.
Let $\upsilon, b \in L^{2}(\Omega)$. 
Then for any small $\varepsilon>0$  and  $1< \alpha+\gamma \leq \frac{5}{2}$,
\begin{equation*}
\mathbb{E} \left\|V_{l}(t_{n})-V_{l}^{n}\right\|^{2}
\leq c\tau^{4} t_{n}^{-4} \left\| \upsilon \right\|_{L^2(\Omega)} + c\tau^{4} t_{n}^{-2} \left\| b \right\|_{L^2(\Omega)} + c \sum_{j=1}^{\infty}\left( \mu_{j}^{l} \right)^{2} \tau^{2\alpha+2\gamma-1-2\varepsilon}  t_{n}^{2\varepsilon},
\end{equation*}
and for $\frac{5}{2} < \alpha+\gamma <  3$,
\begin{equation*}
\mathbb{E} \left\|V_{l}(t_{n})-V_{l}^{n}\right\|^{2}
\leq c\tau^{4} t_{n}^{-4} \left\| \upsilon \right\|_{L^2(\Omega)} + c\tau^{4} t_{n}^{-2} \left\| b \right\|_{L^2(\Omega)} + c \sum_{j=1}^{\infty }\left( \mu_{j}^{l} \right)^{2} \tau^{4} t_{n}^{2\alpha +2\gamma-5}.
\end{equation*}
\end{theorem}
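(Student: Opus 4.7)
The plan is to split the total error into three independent contributions coming from the three right-hand side terms in the representation (2.2)/(2.5) and its discrete analogue (2.7), namely the $A\upsilon$ term, the $Ab$ term, and the stochastic convolution involving $g_l$. Since the noise contribution has already been bounded in Theorem \ref{SDETheorem2.8}, the remaining work is to control the two deterministic contributions and then put everything together.

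For the $A\upsilon$ and $Ab$ terms, I would first write
\begin{equation*}
E(j,t_n)A - \mathcal{E}_\tau^{(j),n} = \frac{1}{2\pi i}\int_{\Gamma_{\theta,\kappa}\setminus\Gamma^\tau_{\theta,\kappa}} e^{zt_n}(z^\alpha-A)^{-1} z^{-j} A\,dz + \frac{1}{2\pi i}\int_{\Gamma^\tau_{\theta,\kappa}} e^{zt_n}\bigl[\cdots\bigr]\,dz,
\end{equation*}
where $\mathcal{E}_\tau^{(j),n}$ denotes the discrete counterpart arising from (2.7) for $j=1,2$, and $[\cdots]$ is the operator difference estimated in Lemma \ref{SDELemma2.7} by $c\tau^2|z|^{2-j}$. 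For the integral over $\Gamma^\tau_{\theta,\kappa}$, taking $\kappa=t_n^{-1}$ and splitting into the circular arc and the two rays $z=re^{\pm i\theta}$ with $\kappa\le r\le \pi/(\tau\sin\theta)$, a direct calculation (entirely analogous to the one giving (2.11)) yields the bound $c\tau^2 t_n^{j-3}$. For the tail integral over $\Gamma_{\theta,\kappa}\setminus\Gamma^\tau_{\theta,\kappa}$, I would use the estimate $\|(z^\alpha-A)^{-1}z^{-j}A\|\le c|z|^{-j}$ and the standard trick $|z|^{-j}\le c\tau^2|z|^{2-j}$ valid on the tail, producing the same order $c\tau^2 t_n^{j-3}$. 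Thus $\|E(j,t_n)A-\mathcal{E}_\tau^{(j),n}\|\le c\tau^2 t_n^{j-3}$, which gives $c\tau^2 t_n^{-2}$ for $j=1$ and $c\tau^2 t_n^{-1}$ for $j=2$.

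Since $\upsilon$ and $b$ are random data in $L^2(\Omega)$ but the operator difference is deterministic, taking $\mathbb{E}\|\cdot\|^2$ factors the norms out, producing the two contributions $c\tau^4 t_n^{-4}\|\upsilon\|_{L^2(\Omega)}$ and $c\tau^4 t_n^{-2}\|b\|_{L^2(\Omega)}$. Finally, I would invoke the orthogonality of the three error components (the noise piece is a stochastic integral while the other two are deterministic, so cross-terms vanish under expectation, or one simply uses the triangle/Cauchy--Schwarz inequality on the $L^2(\Omega;H)$ norm) and add the corresponding noise bound from Theorem \ref{SDETheorem2.8} to obtain the two cases stated.

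The main obstacle I expect is the deterministic contour estimate for $j=1$: the borderline factor $|z|^{2-j}=|z|$ in Lemma \ref{SDELemma2.7} is the slowest-decaying, so one must be careful that the ray integral $\int_\kappa^{\pi/(\tau\sin\theta)} e^{rt_n\cos\theta}\,r\,dr$ really does give $c t_n^{-2}$ uniformly (after changing variables $s=rt_n$, the resulting integral is a convergent incomplete Gamma-type integral since $\cos\theta<0$). Once this is confirmed, the remaining algebra simply matches powers of $\tau$ and $t_n$ and aggregates with Theorem \ref{SDETheorem2.8}.
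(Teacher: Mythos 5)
Your proposal is correct and follows essentially the same route as the paper: the paper likewise splits the error into the two deterministic contributions (each further split into the operator-difference integral over $\Gamma^{\tau}_{\theta,\kappa}$, bounded via Lemma \ref{SDELemma2.7}, and the tail integral over $\Gamma_{\theta,\kappa}\setminus\Gamma^{\tau}_{\theta,\kappa}$, bounded via the resolvent estimate) plus the noise term handled by Theorem \ref{SDETheorem2.8}, obtaining exactly your bounds $c\tau^{2}t_{n}^{-2}\|\upsilon\|$ and $c\tau^{2}t_{n}^{-1}\|b\|$ before combining. The contour calculations you flag as the main obstacle go through exactly as you describe.
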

\begin{proof}
Subtracting~\eqref{SDE2.5} from~\eqref{SDE2.7}, we have the following split
\begin{equation*}
V_{l}^{n}-V_{l}(t_{n}) = I_{1} - I_{2} + I_{3} - I_{4} + I_{5}.
\end{equation*}
Here the related initial terms are given in
\begin{equation*}
\begin{split}
I_{1} & = \frac{1}{2\pi i} \int_{\Gamma^{\tau}_{\theta,\kappa}} e^{zt_{n}} \left(\delta^{\alpha}_{\tau}(e^{-z\tau}) -A\right)^{-1} \delta_{\tau}(e^{-z\tau}) \rho_{1}( e^{-z\tau}) \tau^{2} A - (z^{\alpha}-A)^{-1} z^{-1} A \upsilon dz,\\
I_{2} & = \frac{1}{2\pi i} \int_{\Gamma_{\theta,\kappa}\backslash \Gamma^{\tau}_{\theta,\kappa}} e^{zt_{n}} ( z^{\alpha}-A)^{-1} z^{-1} A\upsilon dz,
\end{split}
\end{equation*}
\begin{equation*}
\begin{split}
I_{3} & = \frac{1}{2\pi i} \int_{\Gamma^{\tau}_{\theta,\kappa}} e^{zt_{n}} \left(\delta^{\alpha}_{\tau}(e^{-z\tau}) -A\right)^{-1} \delta_{\tau}(e^{-z\tau}) \frac{\rho_{2}(e^{-z\tau})}{2} \tau^{3} A - (z^{\alpha}-A)^{-1} z^{-2} A  b dz, \quad\\
I_{4} & = \frac{1}{2\pi i} \int_{\Gamma_{\theta,\kappa}\backslash \Gamma^{\tau}_{\theta,\kappa}} e^{zt_{n}} ( z^{\alpha}-A)^{-1} z^{-2}Ab dz.
\end{split}
\end{equation*}
The related noise term is
\begin{equation*}
\begin{split}
I_{5}
& = \frac{\tau}{2\pi i}\int_{\Gamma^{\tau}_{\theta,\kappa}} e^{zt_n} \left( \delta^{\alpha}_{\tau}(e^{-z\tau}) -A\right)^{-1} \delta^{2-\gamma}_{\tau}(e^{-z\tau}) \widetilde{g_{l}} (e^{-z\tau}) dz \\
& \quad - \frac{1}{2\pi i} \int_{\Gamma_{\theta, \kappa}} e^{zt_n} ( z^{\gamma}-A)^{-1} z^{2-\gamma} \widehat{g_{l}}(z) dz,
\end{split}
\end{equation*}
which is estimated in Theorem \ref{SDETheorem2.8}.

From Lemma \ref{SDELemma2.7}, we estimate $I_{1}$ and $I_{3}$ as following
\begin{equation*}
\left\|I_{1} \right\|_{L^2(\Omega)} \leq c\tau^{2} t_{n}^{-2} \left\| \upsilon \right\|_{L^2(\Omega)} \quad {\rm and} \quad \left\| I_{3} \right\|_{L^2(\Omega)} \leq c\tau^{2} t_{n}^{-1} \left\| b \right\|_{L^2(\Omega)}.
\end{equation*}
Using the resolvent estimate~\eqref{SDE1.4}, we estimate $I_{2}$ and $I_{4}$ as following
\begin{equation*}
\left\|I_{2} \right\|_{L^2(\Omega)} \leq   c\tau^{2} t_{n}^{-2} \left\| \upsilon \right\|_{L^2(\Omega)} \quad {\rm and} \quad \left\|I_{4} \right\|_{L^2(\Omega)} \leq   c\tau^{2} t_{n}^{-1} \left\| b \right\|_{L^2(\Omega)}.
\end{equation*}
Thus, for $1< \alpha+\gamma \leq \frac{5}{2}$, we have
\begin{equation*}
\mathbb{E} \left\| V_{l}(t_{n})-V_{l}^{n} \right\|^{2}
\leq c\tau^{4} t_{n}^{-4} \left\| \upsilon \right\|_{L^2(\Omega)} + c\tau^{4} t_{n}^{-2} \left\| b \right\|_{L^2(\Omega)} + c \sum_{j=1}^{\infty}\left( \mu_{j}^{l} \right)^{2} \tau^{2\alpha + 2\gamma-1-2\varepsilon}  t_{n}^{2\varepsilon},
\end{equation*}
and for $\frac{5}{2} < \alpha+\gamma <  3$, it yields
\begin{equation*}
\mathbb{E} \left\|V_{l}(t_{n})-V_{l}^{n}\right\|^{2}
\leq c\tau^{4} t_{n}^{-4} \left\| \upsilon \right\|_{L^2(\Omega)} + c\tau^{4} t_{n}^{-2} \left\| b \right\|_{L^2(\Omega)} + c \sum_{j=1}^{\infty }\left( \mu_{j}^{l} \right)^{2} \tau^{4} t_{n}^{2\alpha +2\gamma -5}.
\end{equation*}
The proof is completed.
\end{proof}

\begin{theorem}\label{SDETheorem2.10}
Let $V (t_{n})$ and $V_{l}^{n}$ be the solutions of~\eqref{SDE2.2} and~\eqref{SDE2.7}, respectively.
Let $\upsilon, b \in L^{2}(\Omega)$. 
Then for any small $\varepsilon>0$, it holds
\begin{equation*}
\begin{split}
\mathbb{E} \left\|V(t_{n})-V_{l}^{n}\right\|^{2}
&\leq  c\tau^{4} t_{n}^{-4} \left\| \upsilon \right\|_{L^2(\Omega)} + c \tau^{4} t_{n}^{-2} \left\| b \right\|_{L^2(\Omega)}\\
&\quad +c \sum_{j=1}^{\infty} {(\eta_{j}^{l})}^{2} +  c \sum_{j=1}^{\infty}\left( \mu_{j}^{l} \right)^{2}  \tau^{2\alpha+2\gamma-1-2\varepsilon}  t_{n}^{2\varepsilon},~~1< \alpha+\gamma \leq \frac{5}{2};
\end{split}
\end{equation*}
and
\begin{equation*}
\begin{split}
\mathbb{E} \left\|V(t_{n})-V_{l}^{n}\right\|^{2}
&\leq  c\tau^{4} t_{n}^{-4} \left\| \upsilon \right\|_{L^2(\Omega)} + c \tau^{4} t_{n}^{-2} \left\| b \right\|_{L^2(\Omega)}\\
&\quad +c \sum_{j=1}^{\infty} {(\eta_{j}^{l})}^{2} + c \sum_{j=1}^{\infty }\left( \mu_{j}^{l} \right)^{2} \tau^{4} t_{n}^{2\alpha +2\gamma-5},~~\frac{5}{2} < \alpha+\gamma <  3.
\end{split}
\end{equation*}
\end{theorem}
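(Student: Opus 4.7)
The plan is to derive Theorem \ref{SDETheorem2.10} as a direct combination of the noise-smoothing estimate in Theorem \ref{SDETheorem2.2} with the fully discrete error estimate in Theorem \ref{SDETheorem2.9}, glued together by a single triangle-inequality splitting. Concretely, I would write
\begin{equation*}
V(t_{n}) - V_{l}^{n} = \bigl( V(t_{n}) - V_{l}(t_{n}) \bigr) + \bigl( V_{l}(t_{n}) - V_{l}^{n} \bigr),
\end{equation*}
then apply the inequality $\|a+b\|^{2} \leq 2\|a\|^{2} + 2\|b\|^{2}$ and take expectations. This reduces the proof to bounding the two pieces that have already been established.

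For the first piece I would invoke Theorem \ref{SDETheorem2.2}, whose hypothesis $\alpha+\gamma > 1/2$ is certainly satisfied in both regimes $1 < \alpha + \gamma \leq 5/2$ and $5/2 < \alpha + \gamma < 3$, to conclude
\begin{equation*}
\mathbb{E}\| V(t_{n}) - V_{l}(t_{n}) \|^{2} \leq c \sum_{j=1}^{\infty} (\eta_{j}^{l})^{2}.
\end{equation*}
For the second piece I would apply Theorem \ref{SDETheorem2.9} in each of the two regimes, producing the initial-data contributions involving $\|\upsilon\|_{L^{2}(\Omega)}$ and $\|b\|_{L^{2}(\Omega)}$ together with either $\tau^{2\alpha + 2\gamma - 1 - 2\varepsilon} t_{n}^{2\varepsilon}$ or $\tau^{4} t_{n}^{2\alpha + 2\gamma - 5}$, depending on which case of $\alpha + \gamma$ we are in. Summing the two contributions and absorbing the factor $2$ into the generic constant $c$ immediately yields the two displayed bounds in the statement of Theorem \ref{SDETheorem2.10}.

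There is no real obstacle here: everything reduces to verifying that the triangle inequality splitting is admissible, which is immediate since the decomposition is linear and both $V(t_{n})$ and $V_{l}(t_{n})$ solve equations of the same type differing only through the approximation $\sigma_{j}^{l}$ of $\sigma_{j}$, and both $V_{l}(t_{n})$ and $V_{l}^{n}$ use identical noise data $W_{l}$ but differ only in time discretization. The only point deserving a short comment in the write-up is that the bound from Theorem \ref{SDETheorem2.2} is independent of $n$ (it holds uniformly in $t$), so it simply adds the time-independent term $c\sum_{j}(\eta_{j}^{l})^{2}$ to each of the two regime-dependent estimates, producing exactly the statement to be proved.
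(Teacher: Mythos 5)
Your proposal is correct and coincides with the paper's own argument: the paper proves Theorem \ref{SDETheorem2.10} exactly by combining Theorems \ref{SDETheorem2.2} and \ref{SDETheorem2.9} via the triangle inequality. Nothing further is needed.
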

\begin{proof}
By Theorems \ref{SDETheorem2.2}, \ref{SDETheorem2.9} and the triangle inequality, the proof is completed.
\end{proof}

\begin{remark}\label{SDERemark2.11}
For ID$1$-BDF$2$ method,
\begin{equation}\label{SDE2.17}
{\partial}_{\tau}^{\alpha} V_{l}^{n} - A V_{l}^{n} = {\partial}_{\tau} \left( t_{n} A\upsilon + \frac{t_{n}^{2}}{2} Ab \right) + {\partial}_{\tau}^{1- \gamma} g_{l} (t_{n}), \quad \alpha \in (1,2)
\end{equation}
with $g_{l}(t) = 1 \ast \frac{d W_{l}(t)}{ dt} $, the similar proof with order $O(\tau^{\min\{\alpha + \gamma -1/2,1\}})$  can be obtained by Theorem \ref{SDETheorem2.10}.
\end{remark}

\begin{remark}\label{SDERemark2.12}
From Theorem \ref{SDETheorem2.10}, for the subdiffusion case $\alpha \in (0, 1)$, the proof with order  $O(\tau^{\alpha + \gamma -1/2})$ can be derived by ID$2$-BDF$2$ method for any $\alpha + \gamma > 1/2$.
\end{remark}

\section{ID$3$-BDF$3$ method}\label{Sec3}
To break the second-order barrier in Section \ref{Sec2}, we further propose the ID$3$-BDF3 method.
Note that BDF$3$ methods are $A(\vartheta)$-stable with $\vartheta\approx 86.03^\circ$; see \cite{Akrivisenergy2021,HairerSolving2010}.
Thus, for the fractional diffusion-wave equation, the BDF3 scheme is unconditionally stable for any $\alpha < \alpha^{\ast}:=\pi/(\pi-\vartheta) = 1.91$.
For $\alpha \geq \alpha^{\ast}$, it is conditionally stable, see Condition 3.1 in \cite[p. A3137]{JinCorrection2017}, which also require to be fulfilled in this section.

\subsection{Numerical scheme and solution representation}\label{Sec3.1}
With $g(t): = \frac{t^2}{2} \ast \frac{d W(t)}{dt}$, we rewrite \eqref{SDE1.7} as
\begin{equation}\label{SDE3.1}
\partial^{\alpha}_t V(t) - A V (t) = \partial^{2}_{t} \left( \frac{t^{2}}{2} A\upsilon + \frac{t^{3}}{6} Ab \right) + \partial^{ 3-\gamma}_{t} g(t), \quad t \in (0, T].
\end{equation}

With $g_{l}(t)$ defined by \eqref{g_l_1}, we rewrite \eqref{SDE2.4} as
\begin{equation}\label{SDE3.2}
\partial^{\alpha}_{t} V_{l}(t) - A V_{l}(t) = \partial^{2}_{t} \left( \frac{t^{2}}{2} A\upsilon + \frac{t^{3}}{6} Ab \right) + \partial^{ 3-\gamma}_{t} g_{l} (t), \quad t \in (0, T].
\end{equation}
Then ID$3$-BDF$3$ method for~\eqref{SDE3.2} is designed by
\begin{equation}\label{SDE3.3}
\partial^{\alpha}_{\tau} V_{l}^{n} - AV_{l}^{n}= \partial^{2}_{\tau} \left( \frac{t_{n}^{2}}{2} A\upsilon + \frac{t_{n}^{3}}{6} Ab \right) + \partial^{3-\gamma}_{\tau}  g_{l} (t_{n}).
\end{equation}
where ${\partial}_{\tau}^{\beta} \varphi^{n}$, $\beta= \alpha, 2, 3- \gamma $ are defined by \eqref{SDE1.13}.

Similar to the proof of Lemma \ref{SDELemma2.4}, for the discrete solution representation in \eqref{SDE3.3}, we have the following results.
\begin{lemma}\label{SDELemma3.1}
Let $\delta_{\tau}(\xi)$ be given in~\eqref{SDE1.14} and $\rho_{j}(\xi)=\sum^{\infty}_{n=1}n^{j} \xi^{n}$ with $j=2, 3$.
Let $g_{l}(t)$ be defined by \eqref{g_l_1}. 
Then the discrete solution of~\eqref{SDE3.3} is represented by
\begin{align}
V_{l}^{n}
& =\frac{1}{2\pi i}\int_{\Gamma^{\tau}_{\theta,\kappa}} e^{zt_n} {\left(\delta^{\alpha}_{\tau}(e^{-z\tau})-A\right)}^{-1} \delta^{2}_{\tau}(e^{-z\tau}) \tau \left( \frac{\rho_{2}(e^{-z\tau})}{2} \tau^{2} A \upsilon +  \frac{\rho_{3}(\xi)}{6} \tau^{3} Ab  \right) dz  \notag \\
& \quad + \frac{1}{2\pi i}\int_{\Gamma^{\tau}_{\theta,\kappa}} e^{zt_n} {\left(\delta^{\alpha}_{\tau}(e^{-z\tau})-A\right)}^{-1} \delta^{3-\gamma}_{\tau}(e^{-z\tau}) \tau  \widetilde{g_{l}}(e^{-z\tau}) dz \label{SDE3.4}
\end{align}
with $\Gamma^{\tau}_{\theta, \kappa}=\{z\in \Gamma_{\theta, \kappa}: |\Im z|\leq \pi / \tau\}$. 
\end{lemma}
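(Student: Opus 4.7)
The plan is to mimic the argument of Lemma \ref{SDELemma2.4} verbatim, adapting only the source term and the exponents that differ between the ID$2$-BDF$2$ and ID$3$-BDF$3$ settings. I would multiply equation \eqref{SDE3.3} by $\xi^n$ and sum over $n\geq 1$, using $V_l^0=0$ to absorb the missing index. The identity
\begin{equation*}
\sum_{n=1}^{\infty} {\partial}_{\tau}^{\beta} \varphi^{n} \xi^{n} = \delta_{\tau}^{\beta}(\xi)\, \widetilde{\varphi}(\xi)
\end{equation*}
(proved by a standard Cauchy-product reindexing, exactly as in Lemma \ref{SDELemma2.4}) applies uniformly for $\beta=\alpha,\,2,\,3-\gamma$, so each convolution-quadrature operator on the right-hand side is converted into multiplication by the corresponding symbol $\delta_{\tau}^{\beta}(\xi)$.

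Next I would compute the generating functions of the polynomial source terms. Since $\sum_{n=1}^{\infty} n^{j}\xi^{n}=\rho_{j}(\xi)$, one has
\begin{equation*}
\sum_{n=1}^{\infty} \frac{t_{n}^{2}}{2}\,\xi^{n} = \frac{\tau^{2}\rho_{2}(\xi)}{2}, \qquad
\sum_{n=1}^{\infty} \frac{t_{n}^{3}}{6}\,\xi^{n} = \frac{\tau^{3}\rho_{3}(\xi)}{6},
\end{equation*}
and $\sum_{n=1}^{\infty} g_{l}(t_{n})\,\xi^{n}=\widetilde{g_{l}}(\xi)$ by definition. Substituting these into the summed equation and solving the resulting algebraic equation in operator form yields
\begin{equation*}
\widetilde{V_{l}}(\xi) = \bigl(\delta_{\tau}^{\alpha}(\xi)-A\bigr)^{-1}\!\left[\delta_{\tau}^{2}(\xi)\!\left(\tfrac{\rho_{2}(\xi)}{2}\tau^{2} A\upsilon + \tfrac{\rho_{3}(\xi)}{6}\tau^{3} Ab\right) + \delta_{\tau}^{3-\gamma}(\xi)\,\widetilde{g_{l}}(\xi)\right],
\end{equation*}
which is the ID$3$-BDF$3$ analogue of \eqref{SDE2.8}. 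Invertibility of $\delta_{\tau}^{\alpha}(\xi)-A$ on the relevant contour is ensured by the BDF$3$ counterpart of Lemma \ref{SDELemma2.5} together with the resolvent bound \eqref{SDE1.4}, under the $A(\vartheta)$-stability restriction on $\alpha$ discussed after \eqref{SDE3.1}.

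Finally, to pass from the generating-function representation to the pointwise formula \eqref{SDE3.4}, I would apply Cauchy's integral formula
\begin{equation*}
V_{l}^{n} = \frac{1}{2\pi i}\oint_{|\xi|=\rho} \xi^{-n-1}\,\widetilde{V_{l}}(\xi)\,d\xi,
\end{equation*}
make the substitution $\xi=e^{-z\tau}$ so that $d\xi=-\tau\,\xi\,dz$, and deform the circle to the truncated contour $\Gamma_{\theta,\kappa}^{\tau}$ via Cauchy's theorem, exactly as in Lemma \ref{SDELemma2.4}. The only step that is not entirely routine is justifying the contour deformation and the convergence of the tails: one must check that $\widetilde{V_{l}}(e^{-z\tau})$ is analytic and decays on the strip $|\Im z|\leq \pi/\tau$, which follows from the BDF$3$ analogue of Lemma \ref{SDELemma2.5} ensuring $\delta_{\tau}(e^{-z\tau})\in\Sigma_{\pi/2+\varepsilon}$ for $z\in\Gamma_{\theta,\kappa}^{\tau}$. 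Once this is in place, the two contour integrals in \eqref{SDE3.4} drop out directly, completing the proof.
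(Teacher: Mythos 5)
Your proposal is correct and follows exactly the route the paper intends: the paper's "proof" of Lemma \ref{SDELemma3.1} is simply the remark that it is analogous to Lemma \ref{SDELemma2.4}, and your adaptation (generating functions of $t_n^2/2$ and $t_n^3/6$ giving $\tau^2\rho_2/2$ and $\tau^3\rho_3/6$, the symbol identity for $\beta=\alpha,2,3-\gamma$, then Cauchy's integral formula with $\xi=e^{-z\tau}$ and contour deformation) is precisely that analogue. The attention you pay to the contour deformation and the BDF$3$ sectoriality condition (Lemma \ref{SDELemma3.2}) is consistent with the stability restriction the paper imposes at the start of Section \ref{Sec3}.
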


\subsection{Error estimates}\label{Sec3.2}
We give some lemmas which will be used in the error estimates.
\begin{lemma}\cite{JinCorrection2017}\label{SDELemma3.2}
Let $\delta_{\tau}(\xi)$ be given in~\eqref{SDE1.14}. Then there exist the positive constants $c_{1},c_{2}$, $c$ and $\vartheta=86.03^{\circ}$, $\theta \in (\pi/2, \theta_{\varepsilon})$ with $\theta_{\varepsilon} \in (\pi/2, \pi),~\forall \varepsilon>0$ such that
\begin{equation*}
\begin{split}
& c_{1}|z|\leq \left| \delta_{\tau}(e^{-z\tau}) \right| \leq c_{2}|z|, \quad \left| \delta_{\tau}(e^{-z\tau})-z \right| \leq c \tau^{3}|z|^{4},\\
& \left| \delta^{\alpha}_{\tau}(e^{-z\tau})-z^{\alpha} \right| \leq c \tau^{3}|z|^{3+\alpha}, \quad \delta_{\tau}(e^{-z\tau}) \in \Sigma_{\pi - \vartheta +\varepsilon} \quad {\forall} z\in \Gamma^{\tau}_{\theta,\kappa}.
\end{split}
\end{equation*}
\end{lemma}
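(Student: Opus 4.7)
The plan is to mirror the structure of Lemma~\ref{SDELemma2.5}, whose BDF$2$ version is proved in \cite{JinCorrection2017}, and upgrade each estimate to the consistency order of BDF$3$. The starting point is to rewrite
\[
\delta_\tau(e^{-z\tau}) = \tau^{-1}\,\psi(z\tau), \qquad \psi(w) := \tfrac{11}{6} - 3 e^{-w} + \tfrac{3}{2} e^{-2w} - \tfrac{1}{3} e^{-3w},
\]
and observe that $\psi$ is entire with $\psi(0)=0$, $\psi'(0)=1$, and (by the third-order accuracy of BDF$3$) $\psi(w)=w+O(w^4)$ near the origin. On the truncated contour $\Gamma^{\tau}_{\theta,\kappa}$ the quantity $|z\tau|$ is uniformly bounded above by $\pi/\sin\theta$, so $\psi(z\tau)$ stays in a fixed compact set on which the remainder bound $|\psi(w)-w|\leq c|w|^4$ holds uniformly. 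Dividing by $\tau$ yields at once the consistency estimate $|\delta_\tau(e^{-z\tau})-z|\leq c\tau^3|z|^4$.

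From this the two-sided bound $c_1|z|\leq|\delta_\tau(e^{-z\tau})|\leq c_2|z|$ follows: for small $|z\tau|$ one invokes the consistency estimate directly, while for $|z\tau|$ bounded away from zero (and below $\pi/\sin\theta$) one uses continuity of $\psi$ together with the fact that $\psi$ has no zeros on the closed, bounded subset of the relevant sector other than the origin itself. For the fractional-power estimate $|\delta_\tau^\alpha(e^{-z\tau})-z^\alpha|\leq c\tau^3|z|^{3+\alpha}$, I would apply the integral identity
\[
a^\alpha-b^\alpha=\alpha\int_0^1\bigl(\theta a+(1-\theta)b\bigr)^{\alpha-1}(a-b)\,d\theta
\]
with $a=\delta_\tau(e^{-z\tau})$ and $b=z$; the two-sided bound, together with the sector containment established below, controls $|\theta a+(1-\theta)b|^{\alpha-1}$ by $c|z|^{\alpha-1}$, after which the consistency estimate finishes the job.

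The main obstacle, and the genuinely new ingredient relative to the BDF$2$ case, is the sector containment $\delta_\tau(e^{-z\tau})\in\Sigma_{\pi-\vartheta+\varepsilon}$ with the sharp angle $\vartheta\approx86.03^{\circ}$. This is precisely the $A(\vartheta)$-stability of BDF$3$ transposed to the convolution-quadrature setting: one must show that the image $\psi(\Sigma_\theta)$, restricted to the truncated region $|w|\leq\pi/\sin\theta$, is contained in $\Sigma_{\pi-\vartheta+\varepsilon}$ provided $\theta$ is chosen sufficiently close to $\pi/2$. I would establish this by parameterizing the boundary rays $w=re^{\pm i\theta}$ and tracking $\arg\psi(w)$: near $w=0$ one has $\psi(w)\approx w$, so $\arg\psi(w)\approx\pm\theta$, which sits comfortably inside $\Sigma_{\pi-\vartheta+\varepsilon}$; for larger $|w|$ one invokes the bound on the maximum argument of $\psi$ over $\overline{\Sigma_\theta}$, which is exactly the stability-angle computation for BDF$3$ carried out in \cite{HairerSolving2010}. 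The enlargement by $\varepsilon$ absorbs the truncation of the contour at $|\Im z|=\pi/\tau$ and completes the proof.
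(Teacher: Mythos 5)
The paper does not prove this lemma: it is quoted verbatim from \cite{JinCorrection2017}, so there is no in-paper argument to compare yours against. Judged on its own terms, your sketch reproduces the standard proof strategy from that reference (and from Lubich's original convolution-quadrature analysis): the rescaling $\delta_\tau(e^{-z\tau})=\tau^{-1}\psi(z\tau)$, the observation that $\psi(w)-w$ vanishes to fourth order at $w=0$ together with boundedness of $|z\tau|$ on the truncated contour, the integral identity for $a^\alpha-b^\alpha$, and the identification of the sector containment with the $A(\vartheta)$-stability angle of BDF$3$. This is the right route and I see no step that would fail.

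Two points deserve tightening. First, the two-sided bound $c_1|z|\leq|\delta_\tau(e^{-z\tau})|$ for $|z\tau|$ bounded away from zero rests on $\psi$ having no zero in the relevant compact region other than $w=0$; this should be checked explicitly. The generating polynomial factors as $-\tfrac{1}{3}(\xi-1)\bigl(\xi^2-\tfrac{7}{2}\xi+\tfrac{11}{2}\bigr)$, whose quadratic factor has roots of modulus $\sqrt{11/2}>1$, and $e^{-w}=1$ forces $w\in 2\pi i\mathbb{Z}$, which meets the strip $|\Im w|\leq\pi$ only at $w=0$; so the claim holds, but it is not automatic from "continuity of $\psi$" alone. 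Second, the $A(\vartheta)$-stability computation controls $\arg\psi(w)$ only for $\operatorname{Re}w\geq 0$, whereas on $\Gamma^{\tau}_{\theta,\kappa}$ with $\theta>\pi/2$ one has $\operatorname{Re}w<0$ on the rays; the $\varepsilon$-enlargement of the sector is what absorbs this extension (via compactness of the truncated region and continuity, for $\theta$ sufficiently close to $\pi/2$), not the contour truncation per se, which instead is what makes the region compact. With those clarifications your argument is a faithful reconstruction of the cited proof.
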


\begin{lemma}\label{SDELemma3.3}
Let $\delta_{\tau}(\xi)$ be given in~\eqref{SDE1.14} and $\rho_{2}(\xi)=\sum^{\infty}_{n=1} n^{2} \xi^{n}$. Then there exists a positive constants $c$ such that
\begin{equation*}
\left|  \frac{\rho_{2}(e^{-z\tau})}{2} \tau^{3} - z^{-3} \right| \leq c \tau^{3}~
~{\rm and}~~\left| \delta^{3-\gamma}_{\tau}(e^{-z\tau}) \frac{\rho_{2}(e^{-z\tau})}{2} \tau^{3} - z^{-\gamma} \right| \leq c \tau^{3}|z|^{3-\gamma}~~  \forall z\in \Gamma^{\tau}_{\theta,\kappa}
\end{equation*}
where $\theta\in (\pi/2,\pi)$ is sufficiently close to $\pi/2$.
\end{lemma}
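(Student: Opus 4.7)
The plan is to mirror the structure of the proof of Lemma~\ref{SDELemma2.6}, with the BDF$2$ weights replaced by BDF$3$ weights and $\rho_1$ replaced by $\rho_2$. The two estimates are of different natures: the first is a statement purely about $\rho_2(\xi)$ as a rational generating function in $\xi = e^{-z\tau}$, while the second combines this with the smoothness of the symbol $\delta_\tau^{3-\gamma}$ from Lemma~\ref{SDELemma3.2}.

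For the first estimate, I would use the closed form $\rho_2(\xi)=\xi(1+\xi)/(1-\xi)^3$, so that
\begin{equation*}
\frac{\rho_2(e^{-z\tau})}{2}\tau^3 = \frac{e^{-z\tau}(1+e^{-z\tau})\tau^3}{2(1-e^{-z\tau})^3}.
\end{equation*}
Writing $(1-e^{-z\tau})/\tau = z\,\varphi(z\tau)$ with $\varphi(w)=(1-e^{-w})/w$ analytic and nonvanishing on the truncated contour, the quantity becomes $z^{-3}$ multiplied by an analytic factor of $z\tau$ that equals $1$ at $z\tau = 0$. A Taylor expansion in $z\tau$ then shows the difference is $z^{-3} \cdot O((z\tau)^3) = O(\tau^3)$, uniformly for $z\in\Gamma^\tau_{\theta,\kappa}$ since $|z\tau|\le \pi/\sin\theta$ on that contour. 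This is exactly the analogue of the first inequality in Lemma~\ref{SDELemma2.6}, which is cited from \cite{ShiHighorder2023}, and the same reference (or a direct expansion) gives the result here.

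For the second estimate, I would use the splitting
\begin{equation*}
\delta^{3-\gamma}_{\tau}(e^{-z\tau})\frac{\rho_2(e^{-z\tau})}{2}\tau^3 - z^{-\gamma} = J_1+J_2,
\end{equation*}
where
\begin{equation*}
J_1 = \delta^{3-\gamma}_{\tau}(e^{-z\tau})\Bigl[\frac{\rho_2(e^{-z\tau})}{2}\tau^3 - z^{-3}\Bigr],\qquad J_2 = z^{-3}\bigl[\delta^{3-\gamma}_{\tau}(e^{-z\tau}) - z^{3-\gamma}\bigr].
\end{equation*}
By Lemma~\ref{SDELemma3.2}, $|\delta_\tau(e^{-z\tau})| \le c_2 |z|$ and $\delta_\tau(e^{-z\tau}) \in \Sigma_{\pi-\vartheta+\varepsilon}$ so that the fractional power $\delta_\tau^{3-\gamma}$ is well-defined and satisfies $|\delta^{3-\gamma}_{\tau}(e^{-z\tau})|\le c|z|^{3-\gamma}$. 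Combining this with the first estimate gives $|J_1|\le c\tau^3 |z|^{3-\gamma}$. The analogue of the third bound in Lemma~\ref{SDELemma3.2} with exponent $3-\gamma$ in place of $\alpha$ yields $|\delta^{3-\gamma}_{\tau}(e^{-z\tau})-z^{3-\gamma}| \le c\tau^3|z|^{6-\gamma}$, hence $|J_2|\le c\tau^3|z|^{3-\gamma}$. A triangle inequality then gives the claim.

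The main technical point, and the one I would be most careful about, is the extension of the symbol estimate $|\delta^\alpha_\tau(e^{-z\tau})-z^\alpha|\le c\tau^3|z|^{3+\alpha}$ from Lemma~\ref{SDELemma3.2} to the non-integer exponent $3-\gamma$. This is not automatic from the statement quoted, but follows by the same Taylor argument applied to the analytic branch of the fractional power on the sector $\Sigma_{\pi-\vartheta+\varepsilon}$, which contains $\delta_\tau(e^{-z\tau})$ for $z\in\Gamma^\tau_{\theta,\kappa}$; this step is where the angle restriction $\alpha < \alpha^\ast$ (and the corresponding condition for $3-\gamma$) enters. Everything else is a routine calculation once that symbol estimate is in hand.
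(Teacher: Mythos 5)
Your proposal is correct and follows essentially the same route as the paper: the paper proves Lemma~\ref{SDELemma3.3} by declaring it "similar to Lemma~\ref{SDELemma2.6}," whose proof is exactly your splitting into $J_1$ and $J_2$ with the first estimate and the symbol bounds of Lemma~\ref{SDELemma3.2} (applied, as you rightly flag, with the non-integer exponent $3-\gamma$ in place of $\alpha$). Your explicit treatment of the first estimate via the closed form $\rho_2(\xi)=\xi(1+\xi)/(1-\xi)^3$ and of the fractional-power extension of the symbol estimate fills in details the paper leaves implicit, but introduces no new ideas.
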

\begin{proof}
The similar arguments can be performed as Lemma \ref{SDELemma2.6}, we
omit it here.
\end{proof}

\begin{lemma}\label{SDELemma3.4}
Let $\delta_{\tau}(\xi)$ be given in~\eqref{SDE1.14} and $\rho_{j}(\xi)=\sum^{\infty}_{n=1}n^{j} \xi^{n}$, $j=2, 3$.
Then there exists a positive constant $c$ such that
\begin{equation*}
\left\| {\left( \delta^{\alpha}_{\tau}(e^{-z\tau}) - A\right)}^{-1} \delta^{2}_{\tau}(e^{-z\tau}) \frac{\rho_{j}(e^{-z\tau})}{\Gamma(j+1)} \tau^{j+1} A - {(z^{\alpha}-A)}^{-1} z^{1-j} A \right\| \leq  c \tau^{3}|z|^{4-j} 
\end{equation*}
and
\begin{equation*}
\left\| {\left( \delta^{\alpha}_{\tau}(e^{-z\tau}) - A\right)}^{-1} \delta^{3-\gamma}_{\tau}(e^{-z\tau}) \frac{\rho_{2}(e^{-z\tau})}{2} \tau^{3} - {(z^{\alpha}-A)}^{-1} z^{-\gamma} \right\|
 \leq  c \tau^{3}|z|^{3-\alpha-\gamma}.
\end{equation*}
\end{lemma}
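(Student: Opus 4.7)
The proof follows closely the template of Lemma~\ref{SDELemma2.7}, but now invokes the BDF$3$ scalar bounds of Lemmas~\ref{SDELemma3.2} and~\ref{SDELemma3.3} in place of their BDF$2$ counterparts. Writing $\mathcal{R}_\tau(z) := (\delta^{\alpha}_{\tau}(e^{-z\tau}) - A)^{-1}$ and $\mathcal{R}(z) := (z^{\alpha}-A)^{-1}$, my strategy is to split the discrepancy into a scalar part and a resolvent part via
$\mathcal{R}_\tau F_\tau - \mathcal{R} F = \mathcal{R}_\tau[F_\tau - F] + [\mathcal{R}_\tau - \mathcal{R}] F$, and then to combine the resolvent identity $\mathcal{R}_\tau - \mathcal{R} = (z^\alpha - \delta^\alpha_\tau(e^{-z\tau})) \mathcal{R}_\tau \mathcal{R}$ with the cubic consistency bound $|\delta^\alpha_\tau(e^{-z\tau}) - z^\alpha| \leq c\tau^3|z|^{3+\alpha}$ from Lemma~\ref{SDELemma3.2}.

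For the second inequality I would take $F(z) = z^{-\gamma}$ and $F_\tau(z) = \delta^{3-\gamma}_{\tau}(e^{-z\tau}) \rho_{2}(e^{-z\tau}) \tau^{3}/2$. Lemma~\ref{SDELemma3.3} gives $|F_\tau(z) - F(z)| \leq c\tau^3|z|^{3-\gamma}$, and the resolvent estimate analogous to~\eqref{SDE2.9} yields $\|\mathcal{R}_\tau(z)\| \leq c|z|^{-\alpha}$, so the scalar piece contributes $c\tau^3|z|^{3-\alpha-\gamma}$. The resolvent piece contributes $c\tau^3|z|^{3+\alpha}\cdot|z|^{-\alpha}\cdot|z|^{-\alpha}\cdot|z|^{-\gamma} = c\tau^3|z|^{3-\alpha-\gamma}$, and the triangle inequality closes the argument. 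This part is essentially a mechanical upgrade of Lemma~\ref{SDELemma2.7}, with $\tau^2$ replaced by $\tau^3$.

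For the first inequality, with $j=2,3$, I take $F_\tau(z) = \delta^{2}_{\tau}(e^{-z\tau}) \rho_{j}(e^{-z\tau}) \tau^{j+1}/\Gamma(j+1)$ and $F(z) = z^{1-j}$, both multiplied by $A$ on the right, and use $\|\mathcal{R}_\tau(z) A\| \leq c$ obtained from the identity $\mathcal{R}_\tau(z) A = -I + \delta^\alpha_\tau(e^{-z\tau})\mathcal{R}_\tau(z)$ together with Lemma~\ref{SDELemma3.2}. The scalar factor $F_\tau - F$ is decomposed once more as $\delta^2_\tau[\rho_j\tau^{j+1}/\Gamma(j+1) - z^{-j-1}] + (\delta^2_\tau - z^2) z^{-j-1}$, with each piece bounded by Lemmas~\ref{SDELemma3.2}--\ref{SDELemma3.3}; the $j=3$ analog of the scalar estimate is obtained in the spirit of Lemma~3.2 of~\cite{Shi2023highorder}. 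The resolvent difference is handled exactly as in the second inequality.

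The main obstacle is the careful bookkeeping of the $|z|$ powers: for $j=3$ the target bound $c\tau^3|z|$ requires the refined scalar estimate $|\rho_3(e^{-z\tau})\tau^4/6 - z^{-4}| \leq c\tau^3|z|^{-1}$, which is stronger than the naive extrapolation of Lemma~\ref{SDELemma3.3} (stated only for $j=2$). Establishing this decay factor is the delicate step; it can be obtained by Taylor-expanding the generating function $\rho_j(e^{-z\tau})$ about $z\tau = 0$ to sufficient order, or by invoking the high-order scalar estimates already catalogued in~\cite{Shi2023highorder, ShiHighorder2023}. Once this refined bound is in hand, the remaining assembly is routine.
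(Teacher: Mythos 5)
Your proposal is correct and takes essentially the same route as the paper, whose proof consists of the single remark that one applies Lemma~\ref{SDELemma3.3} and repeats the argument of Lemma~\ref{SDELemma2.7}: split into a scalar consistency term bounded via $\|(\delta^{\alpha}_{\tau}(e^{-z\tau})-A)^{-1}\|\le c|z|^{-\alpha}$ (or $\|(\delta^{\alpha}_{\tau}(e^{-z\tau})-A)^{-1}A\|\le c$) and a resolvent-difference term handled through the identity $(\delta^{\alpha}_{\tau}-A)^{-1}-(z^{\alpha}-A)^{-1}=(z^{\alpha}-\delta^{\alpha}_{\tau})(\delta^{\alpha}_{\tau}-A)^{-1}(z^{\alpha}-A)^{-1}$ together with the cubic bounds of Lemma~\ref{SDELemma3.2}. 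The one step you flag as delicate, namely $\left|\rho_{3}(e^{-z\tau})\tau^{4}/6-z^{-4}\right|\le c\tau^{3}|z|^{-1}$, is in fact immediate from the generic $O(\tau^{4})$ consistency estimate combined with $|z|\tau\le \pi/\sin\theta$ on $\Gamma^{\tau}_{\theta,\kappa}$.
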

\begin{proof}
Form Lemma \ref{SDELemma3.3}, the similar arguments can be performed as Lemma \ref{SDELemma2.7}, we
omit it here.
\end{proof}

\begin{theorem}\label{SDETheorem3.5}
Let $V_{l}(t_{n})$ and $V_{l}^{n}$ be the solutions of~\eqref{SDE3.2} and~\eqref{SDE3.4}, respectively.
Let $\upsilon = b = 0$.
Then for any small $\varepsilon>0$
\begin{equation*}
\mathbb{E} \left\|V_{l}(t_{n})-V_{l}^{n}\right\|^{2} \leq c \sum_{j=1}^{\infty}\left( \mu_{j}^{l} \right)^{2} \tau^{2\alpha+2\gamma-1-2\varepsilon}  t_{n}^{2\varepsilon}.
\end{equation*}
\end{theorem}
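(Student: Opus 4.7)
The proof follows the template of Theorem \ref{SDETheorem2.8}, adapted to the richer $\tfrac{t^2}{2}$-regularization and the BDF$3$ kernel. With $\upsilon=b=0$, the continuous solution of \eqref{SDE3.2} can be written as
\begin{equation*}
V_l(t_n)=\Big(\big(\mathscr{E}\ast\tfrac{t^2}{2}\big)\ast\tfrac{dW_l}{dt}\Big)(t_n),
\end{equation*}
where $\mathscr{E}$ is the ID$3$-BDF$3$ kernel from the display after \eqref{SDE1.12}, and the discrete solution \eqref{SDE3.4} as $V_l^n=(\mathscr{E}_\tau\ast g_l)(t_n)$ with $\mathscr{E}_\tau(t)=\sum_n\mathscr{E}_\tau^n\delta_{t_n}(t)$ and
\begin{equation*}
\mathscr{E}_\tau^n=\frac{\tau}{2\pi i}\int_{\Gamma_{\theta,\kappa}^\tau}e^{zt_n}\big(\delta_\tau^\alpha(e^{-z\tau})-A\big)^{-1}\delta_\tau^{3-\gamma}(e^{-z\tau})\,dz.
\end{equation*}
Choosing $\kappa=t_n^{-1}$ and combining Lemma \ref{SDELemma3.2} with the resolvent estimate \eqref{SDE1.4} yields the BDF$3$ analogue of \eqref{SDE2.11}, namely $\|\mathscr{E}_\tau^n\|\le c\tau t_n^{\alpha+\gamma-4}$. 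By the orthonormality of $\{\varphi_j\}$ and It\^{o}'s isometry (Proposition \ref{itoIs}), the problem reduces to
\begin{equation*}
\mathbb{E}\|V_l(t_n)-V_l^n\|^2\le \sum_{j=1}^{\infty}(\mu_j^l)^2\int_0^{t_n}\big\|\big((\mathscr{E}-\mathscr{E}_\tau)\ast\tfrac{t^2}{2}\big)(s)\big\|^2\,ds.
\end{equation*}

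The core estimate is the pointwise bound $\|((\mathscr{E}-\mathscr{E}_\tau)\ast\tfrac{t^2}{2})(t)\|\le c\tau^3 t^{\alpha+\gamma-4}$ for $t\in(t_{m-1},t_m]$ with $m\ge 2$. I would obtain it by Taylor-expanding around $t_m$ at one higher order than in Theorem \ref{SDETheorem2.8},
\begin{equation*}
(\mathscr{E}\ast\tfrac{t^2}{2})(t)=(\mathscr{E}\ast\tfrac{t^2}{2})(t_m)+(t-t_m)(\mathscr{E}\ast t)(t_m)+\tfrac{(t-t_m)^2}{2}(\mathscr{E}\ast 1)(t_m)+\int_t^{t_m}\tfrac{(s-t)^2}{2}\mathscr{E}(s)\,ds,
\end{equation*}
and analogously for $\mathscr{E}_\tau$. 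The differences of the first three Taylor coefficients at $t_m$ are handled by splitting the contour as in the $J_1,J_2$ decomposition of Theorem \ref{SDETheorem2.8} and invoking Lemma \ref{SDELemma3.4}; its factor $\tau^3|z|^{3-\alpha-\gamma}$ delivers $c\tau^3 t_m^{\alpha+\gamma-4}$ for each. The continuous remainder is controlled by $\|\mathscr{E}(s)\|\le cs^{\alpha+\gamma-4}$, while the discrete remainder collapses to $\tfrac{(t_m-t)^2}{2}\mathscr{E}_\tau^m$ since the only Dirac mass of $\mathscr{E}_\tau$ in $(t,t_m]$ is at $t_m$; both are $O(\tau^3 t^{\alpha+\gamma-4})$. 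On the first subinterval $(0,t_1]$ a direct Laplace bound gives $\|(\mathscr{E}\ast\tfrac{t^2}{2})(s)\|\le cs^{\alpha+\gamma-1}$ and $\|(\mathscr{E}_\tau\ast\tfrac{t^2}{2})(s)\|\le c\tau^{\alpha+\gamma-3}s^2$.

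Since $\alpha+\gamma<3$, the raw bound alone would produce the divergent integral $\int_0^{t_n}s^{2\alpha+2\gamma-8}\,ds$. Using $7/2-\alpha-\gamma+\varepsilon>0$ together with $\tau\le s$ on the later subintervals gives $\tau^3 s^{\alpha+\gamma-4}\le c\tau^{\alpha+\gamma-\frac{1}{2}-\varepsilon}s^{-\frac{1}{2}+\varepsilon}$, while on $(0,t_1]$ both $cs^{\alpha+\gamma-1}$ and $c\tau^{\alpha+\gamma-3}s^2$ are dominated by the same expression because $\alpha+\gamma-\tfrac{1}{2}-\varepsilon>0$ and $s\le\tau$. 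Hence the uniform pointwise bound
\begin{equation*}
\big\|\big((\mathscr{E}-\mathscr{E}_\tau)\ast\tfrac{t^2}{2}\big)(s)\big\|\le c\tau^{\alpha+\gamma-\frac{1}{2}-\varepsilon}s^{-\frac{1}{2}+\varepsilon},\qquad s\in(0,t_n],
\end{equation*}
holds, and $\int_0^{t_n}s^{-1+2\varepsilon}\,ds\le c t_n^{2\varepsilon}/(2\varepsilon)$ produces the single uniform bound claimed, valid for every $\alpha\in(1,2)$ and $\gamma\in(0,1)$, eliminating the case split of Theorem \ref{SDETheorem2.8}.

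The main technical obstacle lies in the pointwise kernel estimate: Lemma \ref{SDELemma3.4} must be invoked three times with different powers of $z$, the contour split at $|\Im z|=\pi/\tau$ must be carried out uniformly within the conditional BDF$3$ stability range noted after \eqref{SDE1.14}, and the discrete-kernel bound $\|\mathscr{E}_\tau^m\|\le c\tau t_m^{\alpha+\gamma-4}$ must match the continuous-kernel decay to produce the sharp $\tau^3 t^{\alpha+\gamma-4}$ rate. Once those ingredients are secured, the interpolation is routine and the It\^{o} isometry closes the argument.
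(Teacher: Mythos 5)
Your proposal is correct and follows essentially the same route as the paper: the same solution representations via $\mathscr{E}$ and $\mathscr{E}_\tau$, the same reduction by It\^{o}'s isometry, the same third-order Taylor expansion around $t_n$ with the $J_1,J_2$ contour split and Lemma \ref{SDELemma3.4}, and the same interpolation $\tau^3 t^{\alpha+\gamma-4}\le c\tau^{\alpha+\gamma-\frac{1}{2}-\varepsilon}t^{-\frac{1}{2}+\varepsilon}$ before integrating. Your explicit treatment of the first subinterval and of the exponent bookkeeping in the interpolation step merely spells out details the paper leaves implicit.
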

\begin{proof}
By~\eqref{SDE3.2}, we obtain
\begin{equation}\label{SDE3.5}
\begin{split}
V_{l}(t_{n}) = \left(\mathscr{E}(t)\ast \left( \frac{t^{2}}{2} \ast \frac{d W_{l}(t)}{dt} \right)\right)(t_{n})
=& \left(\left(\mathscr{E}(t)\ast \frac{t^{2}}{2} \right) \ast \frac{d W_{l}(t)}{dt} \right)(t_{n})
\end{split}
\end{equation}
with
\begin{equation}\label{SDE3.6}
\mathscr{E}(t)= \frac{1}{2\pi i} \int_{\Gamma_{\theta,\kappa}} e^{zt}(z^{\alpha} - A)^{-1} z^{3-\gamma} dz.
\end{equation}
Using $g_{l}^{n} = g_{l}(t_{n})$ with $g_{l}(t)$ defined by \eqref{g_l_1} and similar to the proof of Lemma \ref{SDELemma2.4}, it yields
\begin{equation*}
\begin{split}
\widetilde{V_{l}}(\xi)
&=\left( \delta^{\alpha}_{\tau}(\xi) -A\right)^{-1} \delta^{3-\gamma}_{\tau}(\xi) \widetilde{g_{l}}(\xi) = \widetilde{\mathscr{E_{\tau}}}(\xi)\widetilde{g_{l}}(\xi)
=\sum^{\infty}_{n=0}\mathscr{E}^{n}_{\tau}\xi^{n}\sum^{\infty}_{j=0} g_{l}^{j} \xi^{j}\\
&=\sum^{\infty}_{n=0}\sum^{\infty}_{j=0}\mathscr{E}^{n}_{\tau} g_{l}^{j} \xi^{n+j}=\sum^{\infty}_{j=0}\sum^{\infty}_{n=j}\mathscr{E}^{n-j}_{\tau} g_{l}^{j} \xi^{n}
=\sum^{\infty}_{n=0}\sum^{n}_{j=0}\mathscr{E}^{n-j}_{\tau} g_{l}^{j} \xi^{n}=\sum^{\infty}_{n=0}V_{l}^n\xi^{n}
\end{split}
\end{equation*}
with
\begin{equation*}
V_{l}^{n}=\sum^{n}_{j=0}\mathscr{E}^{n-j}_{\tau} g_{l}^{j} :=\sum^{n}_{j=0}\mathscr{E}^{n-j}_{\tau} g_{l}(t_{j}).
\end{equation*}
Here $\sum^{\infty}_{n=0}\mathscr{E}^{n}_{\tau}\xi^{n}=\widetilde{\mathscr{E_{\tau}}}(\xi)=\left( \delta^{\alpha}_{\tau}(\xi) -A\right)^{-1} \delta^{3-\gamma}_{\tau}(\xi)$.
By the Cauchy's integral formula and the change of variables $\xi=e^{-z\tau}$, we obtain the representation of the $\mathscr{E}^{n}_{\tau}$ as 
 \begin{equation*}
\mathscr{E}^{n}_{\tau}=\frac{1}{2\pi i}\int_{|\xi|=\rho}{\xi^{-n-1}\widetilde{\mathscr{E_{\tau}}}(\xi)}d\xi
=\frac{\tau}{2\pi i}\int_{\Gamma^{\tau}_{\theta,\kappa}} {e^{zt_n}\left( \delta^{\alpha}_{\tau}(e^{-z\tau}) -A\right)^{-1} \delta^{3-\gamma}_{\tau}(e^{-z\tau}) }dz,
\end{equation*}
where $\theta\in (\pi/2,\pi)$ is sufficiently close to $\pi/2$ and $\kappa=t_{n}^{-1}$ in~\eqref{SDE1.9}.

According to Lemma \ref{SDELemma3.2}, \eqref{SDE2.9} and $\tau t^{-1}_{n} = \frac{1}{n}\leq 1$, there exists
\begin{equation}\label{SDE3.7}
\|\mathscr{E}^{n}_{\tau}\| \leq c \tau \left[\int^{\frac{\pi}{\tau\sin\theta}}_{\kappa} e^{rt_{n}\cos\theta} r^{3-\alpha-\gamma}dr +\int^{\theta}_{-\theta}e^{\kappa t_{n}\cos\psi} \kappa^{4-\alpha-\gamma}  d\psi \right]
\leq c\tau t_{n}^{\alpha+\gamma-4}.
\end{equation}
Let $ \mathscr{E}_{\tau}(t)=\sum^{\infty}_{n=0}\mathscr{E}^{n}_{\tau}\delta_{t_{n}}(t)$, with $\delta_{t_{n}}$ being the Dirac delta function at $t_{n}$.
Then
\begin{equation}\label{SDE3.8}
(\mathscr{E}_{\tau}(t)\ast g_{l}(t))(t_{n})
 = \left(\sum^{\infty}_{j=0}\mathscr{E}^{j}_{\tau}\delta_{t_{j}}(t) \ast g_{l}(t) \right)(t_{n})
 = \sum^{n}_{j=0}\mathscr{E}^{n-j}_{\tau} g_{l}(t_{j})=V_{l}^{n}.
\end{equation}
Moreover, using the above equation, there exists
\begin{equation*}
\begin{split}
\widetilde{(\mathscr{E}_{\tau}\ast t^{2})}(\xi)
& = \sum^{\infty}_{n=0} \sum^{n}_{j=0}\mathscr{E}^{n-j}_{\tau}t^{2}_{j}\xi^{n}  =\sum^{\infty}_{j=0} \sum^{\infty}_{n=j}\mathscr{E}^{n-j}_{\tau}t^{2}_{j}\xi^{n}
  =\sum^{\infty}_{j=0} \sum^{\infty}_{n=0}\mathscr{E}^{n}_{\tau}t^{2}_{j}\xi^{n+j}\\
& =\sum^{\infty}_{n=0}\mathscr{E}^{n}_{\tau}\xi^{n}\sum^{\infty}_{j=0}t^{2}_{j}\xi^{j}  =\widetilde{\mathscr{E_{\tau}}}(\xi) \tau^{2} \sum^{\infty}_{j=0}j^{2} \xi^{j}
  =\widetilde{\mathscr{E}_{\tau}}(\xi) \tau^{2} \rho_{2}(\xi).
\end{split}
\end{equation*}
According to \eqref{SDE1.2}, \eqref{SDE3.5}, \eqref{SDE3.8} and It\^{o} isometry property, we have
\begin{equation*}\label{eq3.25}
\begin{split}
\mathbb{E} \left\|V_{l}(t_{n})-V_{l}^{n}\right\|^{2}
= & \mathbb{E} \left\| \left( \left[ \left( \mathscr{E}-\mathscr{E}_{\tau} \right) \ast \frac{t^{2}}{2} \right] \ast
\frac{d W_{l}(t)}{dt}  \right) \left( t_{n}  \right) \right\|^{2} \\
= & \mathbb{E} \left\| \sum_{j=1}^{\infty}\int_{0}^{t_{n}} \left[ \left( \left( \mathscr{E}-\mathscr{E}_{\tau} \right)\ast \frac{t^{2}}{2} \right) \left( t_{n}-s  \right) \right] \sigma_{j}^{l}\left( s \right)  \varphi_{j} d \beta_{j}\left( s \right) \right\|^{2} \\
= & \sum_{j=1}^{\infty} \int_{0}^{t_{n} } \left\| \left[ \left( \left( \mathscr{E}-\mathscr{E}_{\tau}  \right)\ast \frac{t^{2}}{2} \right) \left( t_{n}-s  \right) \right] \sigma_{j}^{l}\left( s \right) \varphi_{j} \right\|^{2} ds \\
\leq & \sum_{j=1}^{\infty} \left( \mu_{j}^{l} \right)^{2} \int_{0}^{t_{n} }\left\| \left( \left( \mathscr{E}-\mathscr{E}_{\tau} \right) \ast \frac{t^{2}}{2}  \right) \left( t_{n}-s  \right)  \right\|^{2}ds.
\end{split}
\end{equation*}

Next, we prove the following inequality for any $t\in \left( t_{n-1},t_{n} \right)$ 
\begin{equation*}
\left\| \left( \left( \mathscr{E}-\mathscr{E}_{\tau} \right)\ast \frac{t^{2}}{2} \right) \left( t  \right)  \right\| \leq  c \tau^{\alpha+\gamma-\frac{1}{2}-\varepsilon}t^{-\frac{1}{2}+\varepsilon}.
\end{equation*}
By Taylor series expansion of $\mathscr{E} \left( t \right)$ at $t=t_{n} $, we get
\begin{equation*}
\begin{split}
\left(\mathscr{E} \left( t \right)\ast \frac{t^{2}}{2} \right)(t)
=& \left(\mathscr{E} \left( t \right)\ast \frac{t^{2}}{2} \right) ( t_{n} ) + \left( t-t_{n} \right) \left(\mathscr{E} \left( t \right)\ast t\right) (t_{n}) \\
& + \frac{\left( t-t_{n} \right)^{2}}{2} \left(\mathscr{E} \left( t \right)\ast 1\right) ( t_{n} ) + \int _{t}^{t_{n} } \frac{\left( s-t \right)^{2}}{2} \mathscr{E}\left( s \right)  ds,
\end{split}
\end{equation*}
which also holds for the convolution $\left( \mathscr{E}_{\tau}\ast \frac{t^{2}}{2} \right)\left( t \right)$.
Now, we estimate
\begin{equation*}
\begin{split}
\left\| \left( \left( \mathscr{E}-\mathscr{E}_{\tau } \right)\ast \frac{t^{2}}{2} \right) \left( t_{n}  \right)  \right\| \leq \left\| J_{1}  \right\| +\left\| J_{2} \right\|
\end{split}
\end{equation*}
with
\begin{equation*}
\begin{split}
J_{1}=&\frac{1}{2\pi i}\int_{\Gamma_{\theta ,k}\setminus \Gamma_{\theta ,k}^{\tau}} e^{zt_{n}}\left( z^{\alpha } -A \right)^{-1}\frac{1}{z^{\gamma} } dz, \\
J_{2}=&\frac{1}{2\pi i}\int_{\Gamma_{\theta ,k}^{\tau}} e^{zt_{n}}\left[  \left( z^{\alpha } -A \right)^{-1} z^{-\gamma}  -\left( \delta^{\alpha}_{\tau}(e^{-z\tau}) -A\right)^{-1} \delta^{3-\gamma}_{\tau}(e^{-z\tau}) \frac{\rho_{2}\left( e^{-z\tau} \right)}{2} \tau ^{3} \right]  dz.
\end{split}
\end{equation*}
According to the triangle inequality and Lemma \ref{SDELemma3.4}, it yields 
\begin{equation*}
\left\| J_{1}  \right\|
\leq c \int_{\frac{\pi }{\tau \sin \theta} }^{+\infty} e^{rt_{n} \cos \theta}r^{-\left( \alpha + \gamma \right) }dr
\leq c \tau^{3}t_{n}^{\alpha +\gamma -4}  \int_{\frac{t_{n} \pi }{\tau \sin \theta } }^{+\infty}e^{s\cos \theta }s^{3-\left( \alpha + \gamma \right) }ds \leq c\tau^{3}t_{n}^{\alpha +\gamma -4},
\end{equation*}
and
\begin{equation*}
\begin{split}
\left\| J_{2} \right\|
\leq & c \int_{k}^{\frac{\pi}{\tau \sin \theta} }e^{rt_{n} \cos \theta} \tau^{3}r^{3-\gamma -\alpha} dr + c \int_{-\theta }^{\theta }e^{kt_{n}\cos \psi } \tau ^{3}k^{4-\gamma -\alpha} d\psi
\leq c \tau^{3}t_{n}^{\alpha +\gamma -4}.
\end{split}
\end{equation*}
Then, we have
\begin{equation}\label{SDE3.9}
\left\| \left( \left( \mathscr{E}-\mathscr{E}_{\tau} \right) \ast \frac{t^{2}}{2} \right) \left( t_{n}  \right)  \right\| \leq  c \tau^{3}t_{n}^{\alpha +\gamma -4}.
\end{equation}
Similarly, one has
\begin{equation}\label{SDE3.10}
\begin{split}
& \left\| \left(t-t_{n}\right) \left(\left(\mathscr{E}-\mathscr{E}_{\tau} \right)\ast t\right)(t_{n}) \right\| \leq c \tau^{3}t_{n}^{\alpha +\gamma -4}, \\
& \left\| \left(t-t_{n}\right)^{2} \left(\left(\mathscr{E}-\mathscr{E}_{\tau} \right)\ast 1\right)(t_{n})\right\| \leq c \tau^{3}t_{n}^{\alpha +\gamma -4}.
\end{split}
\end{equation}

According to~\eqref{SDE3.6} and~\eqref{SDE1.4}, it leads to
\begin{equation*}
\left\| \mathscr{E}(t) \right\| \leq c \int_{\Gamma_{\theta,\kappa}} \left| e^{zt} \right| |z|^{3-\alpha-\gamma} |dz| \leq c t^{\alpha+\gamma-4}.
\end{equation*}
Moreover, we obtain
\begin{equation}\label{SDE3.11}
\left\| \int_{t}^{t_{n} } \frac{\left( s-t \right)^{2}}{2}  \mathscr{E}\left( s \right)  ds \right\|
\leq c\int_{t}^{t_{n}} \frac{\left( s-t \right)^{2}}{2} s^{\alpha +\gamma -4}ds.
\end{equation}
By the definition of $\mathscr{E}_{\tau } \left( t \right) =\sum_{n=0}^{\infty } \mathscr{E}_{\tau }^{n} \delta _{t_{n} }\left( t \right)$ and~\eqref{SDE3.7}, we deduce
\begin{equation}\label{SDE3.12}
\left\| \int _{t}^{t_{n} } \frac{\left( s-t \right)^{2}}{2} \mathscr{E}_{\tau}\left( s \right) ds \right\|
\leq \frac{(t_{n}-t)^{2}}{2} \left\| \mathscr{E}^{n}_{\tau} \right\|
\leq c\tau^{3} t_{n}^{\alpha+\gamma-4}.
\end{equation}
For small $\varepsilon$,
according to~\eqref{SDE3.9}-\eqref{SDE3.12}, it yields
\begin{equation*}
\left\| \left( \left( \mathscr{E}-\mathscr{E}_{\tau} \right)\ast \frac{t^{2}}{2} \right) \left( t \right) \right\|
\leq  c \tau^{\alpha+\gamma-\frac{1}{2}-\varepsilon}t^{-\frac{1}{2}+\varepsilon}
\end{equation*}
and
\begin{equation*}
\int_{0}^{t_{n} }\left\| \left( \left( \mathscr{E}-\mathscr{E}_{\tau } \right)\ast \frac{t^{2}}{2} \right) (s) \right\|^{2}ds
\leq c\tau^{2\alpha+2\gamma-1-2\varepsilon} \int_{0}^{t_{n} }  s^{-1+2\varepsilon}  ds
\leq c\tau^{2\alpha+2\gamma-1-2\varepsilon} t_{n}^{2\varepsilon}.
\end{equation*}
The proof is completed.
\end{proof}

\begin{theorem}\label{SDETheorem3.6}
Let $V_{l}(t_{n})$ and $ V_{l}^{n}$ be the solutions of~\eqref{SDE3.2} and~\eqref{SDE3.4}, respectively.
Let $\upsilon, b \in L^{2}(\Omega)$.
Then for any small $\varepsilon>0$
\begin{equation*}
\mathbb{E} \left\|V_{l}(t_{n})-V_{l}^{n}\right\|^{2}
\leq c\tau^{6} t_{n}^{-6} \left\| \upsilon \right\|_{L^2(\Omega)} + c \tau^{6} t_{n}^{-4} \left\| b \right\|_{L^2(\Omega)} + c \sum_{j=1}^{\infty}\left( \mu_{j}^{l} \right)^{2} \tau^{2\alpha+2\gamma-1-2\varepsilon}  t_{n}^{2\varepsilon}.
\end{equation*}
\end{theorem}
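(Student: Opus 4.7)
The plan is to mirror the proof of Theorem \ref{SDETheorem2.9}, splitting the error into contributions from the initial data $\upsilon$, the initial velocity $b$, and the noise, and then invoking the relevant lemmas and Theorem \ref{SDETheorem3.5} for each piece. Since Theorem \ref{SDETheorem3.5} handles only $\upsilon = b = 0$, the novelty in Theorem \ref{SDETheorem3.6} is the treatment of the deterministic initial-data terms that now arise through $\partial_t^2(\frac{t^2}{2} A \upsilon + \frac{t^3}{6} A b)$ in \eqref{SDE3.2}.

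First, I would subtract the continuous solution representation for \eqref{SDE3.2}, obtained by the Laplace transform, from the discrete representation \eqref{SDE3.4}. The Laplace transform of $\partial_t^2(\frac{t^2}{2}A\upsilon + \frac{t^3}{6}Ab)$ produces the symbols $z^{-1}A\upsilon$ and $z^{-2}Ab$, so the continuous part matches the discrete symbols $\delta_\tau^2(e^{-z\tau})\frac{\rho_2(e^{-z\tau})}{2}\tau^3 A\upsilon$ and $\delta_\tau^2(e^{-z\tau})\frac{\rho_3(e^{-z\tau})}{6}\tau^4 A b$. This yields a splitting
\begin{equation*}
V_l^n - V_l(t_n) = I_1 - I_2 + I_3 - I_4 + I_5,
\end{equation*}
where $I_1, I_3$ are integrals over the truncated contour $\Gamma^\tau_{\theta,\kappa}$ of the differences of the symbols associated with $\upsilon$ and $b$ respectively, $I_2, I_4$ are the high-frequency tails $\Gamma_{\theta,\kappa}\setminus\Gamma^\tau_{\theta,\kappa}$ of the continuous contour integrals, and $I_5$ is exactly the noise term bounded in Theorem \ref{SDETheorem3.5}.

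The estimates for $I_1$ and $I_3$ follow from Lemma \ref{SDELemma3.4}, applied with $j=2$ and $j=3$: on $\Gamma^\tau_{\theta,\kappa}$ the symbol error is $c\tau^3|z|^{4-j}$, and after multiplying by $|e^{zt_n}|$ and integrating we get $\|I_1\|_{L^2(\Omega)} \leq c\tau^3 t_n^{-3}\|\upsilon\|_{L^2(\Omega)}$ and $\|I_3\|_{L^2(\Omega)} \leq c\tau^3 t_n^{-2}\|b\|_{L^2(\Omega)}$, using the standard estimates $\int_\kappa^{\pi/(\tau\sin\theta)} e^{rt_n\cos\theta}r^{4-j} dr + \kappa^{5-j}\int_{-\theta}^\theta e^{\kappa t_n\cos\psi}d\psi \leq c t_n^{-(5-j)}$ with $\kappa = t_n^{-1}$. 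For $I_2$ and $I_4$, the tail part is controlled by the resolvent estimate \eqref{SDE1.4} together with the decay of $|e^{zt_n}|$ for $|z|\geq \pi/(\tau\sin\theta)$; using $|z|^{-1}\leq c\tau^3|z|^2$ on that range, the same bounds $c\tau^3 t_n^{-3}\|\upsilon\|_{L^2(\Omega)}$ and $c\tau^3 t_n^{-2}\|b\|_{L^2(\Omega)}$ are obtained.

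Finally, $I_5$ is exactly the noise discretization error treated in Theorem \ref{SDETheorem3.5}, yielding $\mathbb{E}\|I_5\|^2 \leq c\sum_{j=1}^\infty (\mu_j^l)^2 \tau^{2\alpha+2\gamma-1-2\varepsilon} t_n^{2\varepsilon}$. Combining via the triangle inequality and taking expectations (squaring the deterministic bounds, which is why $\tau^3$ becomes $\tau^6$ in the final estimate) gives the claimed result. I do not anticipate a serious obstacle: the only slightly delicate point is checking that the high-frequency tails $I_2, I_4$ match the convergence rate of $I_1, I_3$, but this is the same mechanism already exploited in Theorem \ref{SDETheorem2.9}, now applied with the higher-order consistency order $\tau^3$ afforded by BDF$3$.
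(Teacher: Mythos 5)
Your proposal is correct and follows essentially the same route as the paper: the identical five-term splitting $I_1-I_2+I_3-I_4+I_5$, with $I_1,I_3$ bounded by Lemma \ref{SDELemma3.4} (giving $c\tau^3 t_n^{-3}\|\upsilon\|_{L^2(\Omega)}$ and $c\tau^3 t_n^{-2}\|b\|_{L^2(\Omega)}$), the tails $I_2,I_4$ by the resolvent estimate \eqref{SDE1.4} on $\Gamma_{\theta,\kappa}\setminus\Gamma^{\tau}_{\theta,\kappa}$, and $I_5$ by Theorem \ref{SDETheorem3.5}. No gaps.
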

\begin{proof}
Subtracting~\eqref{SDE3.2} from~\eqref{SDE3.4}, we have the following split
\begin{equation*}
V_{l}^{n}- V_{l}(t_{n}) = I_{1} - I_{2} + I_{3} - I_{4} + I_{5}.
\end{equation*}
Here the related initial terms are
\begin{equation*}
\begin{split}
I_{1} & = \frac{1}{2\pi i} \int_{\Gamma^{\tau}_{\theta,\kappa}} e^{zt_{n}} \left(\delta^{\alpha}_{\tau}(e^{-z\tau}) -A\right)^{-1} \delta^{2}_{\tau}(e^{-z\tau}) \frac{\rho_{2}( e^{-z\tau})}{2} \tau^{3} A - (z^{\alpha}-A)^{-1} z^{-1} A \upsilon dz,\\
I_{2} & = \frac{1}{2\pi i} \int_{\Gamma_{\theta,\kappa}\backslash \Gamma^{\tau}_{\theta,\kappa}} e^{zt_{n}} ( z^{\alpha}-A)^{-1} z^{-1} A\upsilon dz, 
\end{split}
\end{equation*}
\begin{equation*}
\begin{split}
I_{3} & = \frac{1}{2\pi i} \int_{\Gamma^{\tau}_{\theta,\kappa}} e^{zt_{n}} \left(\delta^{\alpha}_{\tau}(e^{-z\tau}) -A\right)^{-1} \delta^{2}_{\tau}(e^{-z\tau}) \frac{\rho_{3}(e^{-z\tau})}{6} \tau^{4} A - (z^{\alpha}-A)^{-1} z^{-2} A  b dz, \quad\\
I_{4} & = \frac{1}{2\pi i} \int_{\Gamma_{\theta,\kappa}\backslash \Gamma^{\tau}_{\theta,\kappa}} e^{zt_{n}} ( z^{\alpha}-A)^{-1} z^{-2}Ab dz.
\end{split}
\end{equation*}
The related noise term is
\begin{equation*}
\begin{split}
I_{5}
& = \frac{\tau}{2\pi i}\int_{\Gamma^{\tau}_{\theta,\kappa}} e^{zt_n} \left( \delta^{\alpha}_{\tau}(e^{-z\tau}) -A\right)^{-1} \delta^{3-\gamma}_{\tau}(e^{-z\tau}) \widetilde{g_{l}} (e^{-z\tau}) dz \\
& \quad - \frac{1}{2\pi i} \int_{\Gamma_{\theta, \kappa}} e^{zt_n} ( z^{\alpha}-A)^{-1} z^{3-\gamma} \widehat{g_{l}}(z) dz,
\end{split}
\end{equation*}
which is estimated in Theorem \ref{SDETheorem3.5}.

From Lemma \ref{SDELemma3.4}, we estimate $I_{1}$ and $I_{3}$ as following
\begin{equation*}
\left\|I_{1} \right\|_{L^2(\Omega)} \leq c\tau^{3} t_{n}^{-3} \left\| \upsilon \right\|_{L^2(\Omega)} \quad {\rm and} \quad \left\|I_{3} \right\|_{L^2(\Omega)} \leq c\tau^{3} t_{n}^{-2} \left\| b \right\|_{L^2(\Omega)}.
\end{equation*}
Using the resolvent estimate~\eqref{SDE1.4}, we estimate $I_{2}$ and $I_{4}$ as following
\begin{equation*}
\left\|I_{2} \right\|_{L^2(\Omega)} \leq c\tau^{3} t_{n}^{-3} \left\| \upsilon \right\|_{L^2(\Omega)} \quad {\rm and} \quad \left\|I_{4} \right\|_{L^2(\Omega)} \leq c\tau^{3} t_{n}^{-2} \left\| b \right\|_{L^2(\Omega)}.
\end{equation*}
Therefore, we have
\begin{equation*}
\mathbb{E} \left\| V_{l}(t_{n})- V_{l}^{n} \right\|^{2}
\leq c\tau^{6} t_{n}^{-6} \left\| \upsilon \right\|_{L^2(\Omega)} + c\tau^{6} t_{n}^{-4} \left\| b \right\|_{L^2(\Omega)} + c \sum_{j=1}^{\infty}\left( \mu_{j}^{l} \right)^{2} \tau^{2\alpha + 2\gamma-1-2\varepsilon}  t_{n}^{2\varepsilon}.
\end{equation*}
The proof is completed.
\end{proof}
\begin{theorem}\label{SDETheorem3.7}
Let $V (t_{n})$ and $V_{l}^{n}$ be the solutions of~\eqref{SDE3.1} and~\eqref{SDE3.4}, respectively.
Let $\upsilon, b \in L^{2}(\Omega)$. 
Then for any small $\varepsilon>0$, it holds
\begin{equation*}
\begin{split}
\mathbb{E} \left\|V(t_{n})-V_{l}^{n}\right\|^{2}
& \leq c\tau^{6} t_{n}^{-6} \left\| \upsilon \right\|_{L^2(\Omega)} + c\tau^{6} t_{n}^{-4} \left\| b \right\|_{L^2(\Omega)}\\
& \quad + c \sum_{j=1}^{\infty} {(\eta_{j}^{l})}^{2} + c \sum_{j=1}^{\infty}\left( \mu_{j}^{l} \right)^{2} \tau^{2\alpha+2\gamma-1-2\varepsilon}  t_{n}^{2\varepsilon}.
\end{split}
\end{equation*}
\end{theorem}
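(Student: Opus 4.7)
The plan is to mirror exactly the strategy already used in Theorem \ref{SDETheorem2.10}: split the global error into a noise-regularization error plus a time-discretization error, then invoke the two results that have now been established for the ID$3$-BDF$3$ setting.

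First I would introduce the intermediate quantity $V_l(t_n)$, the solution of \eqref{SDE3.2}, and write
\[
V(t_n) - V_l^n \;=\; \bigl(V(t_n) - V_l(t_n)\bigr) \;+\; \bigl(V_l(t_n) - V_l^n\bigr).
\]
Squaring, using the elementary inequality $\|a+b\|^2 \leq 2\|a\|^2 + 2\|b\|^2$, and taking expectations yield
\[
\mathbb{E}\|V(t_n) - V_l^n\|^2 \;\leq\; 2\,\mathbb{E}\|V(t_n) - V_l(t_n)\|^2 \;+\; 2\,\mathbb{E}\|V_l(t_n) - V_l^n\|^2,
\]
so it suffices to control the two summands separately.

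For the regularization error, I would observe that \eqref{SDE3.1} is merely a reformulation of the original \eqref{SDE1.6} via the $m$-fold integral-differential identity \eqref{SDE1.5} with $m=3$, and similarly \eqref{SDE3.2} is a reformulation of \eqref{SSDE1.12}. Hence the processes $V$ and $V_l$ appearing here coincide with those analysed in Theorem \ref{SDETheorem2.2} (note that $\alpha \in (1,2)$ and $\gamma \in (0,1)$ automatically give $\alpha+\gamma>\tfrac12$), and that theorem delivers
\[
\mathbb{E}\|V(t_n) - V_l(t_n)\|^2 \;\leq\; c\sum_{j=1}^{\infty}(\eta_j^l)^2.
\]
For the discretization error, Theorem \ref{SDETheorem3.6} already gives
\[
\mathbb{E}\|V_l(t_n) - V_l^n\|^2 \leq c\tau^{6} t_n^{-6}\|\upsilon\|_{L^2(\Omega)} + c\tau^{6} t_n^{-4}\|b\|_{L^2(\Omega)} + c\sum_{j=1}^{\infty}(\mu_j^l)^2\,\tau^{2\alpha+2\gamma-1-2\varepsilon} t_n^{2\varepsilon}.
\]
Summing the two bounds and absorbing the factor $2$ into the generic constant $c$ produces exactly the claimed inequality.

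The only substantive point beyond bookkeeping is verifying that Theorem \ref{SDETheorem2.2} really does apply in the present section, i.e.\ that the mild representation of $V - V_l$ as $E(\gamma,\cdot)$ convolved against $dW/dt - dW_l/dt$ is unchanged after passing through the $m=3$ reformulation. This follows because both \eqref{SDE1.6} and \eqref{SDE3.1} produce, upon Laplace transform, the same stochastic transfer function $(z^{\alpha}-A)^{-1} z^{-\gamma}$ acting on the noise: the factor $z^{3-\gamma}$ arising in \eqref{SDE3.1} and the factor $z^{-3}$ coming from the triple antiderivative $\tfrac{t^2}{2}\ast dW/dt$ cancel exactly, by the identity \eqref{SDE1.5}. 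Once this equivalence is recorded, the remainder of the argument is a one-line triangle inequality, so I do not expect any genuine obstacle.
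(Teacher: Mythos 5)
Your proposal is correct and follows exactly the paper's argument: the paper proves Theorem \ref{SDETheorem3.7} by combining Theorem \ref{SDETheorem2.2} and Theorem \ref{SDETheorem3.6} with the triangle inequality. Your extra check that the $m=3$ reformulation leaves the regularization error $V - V_l$ unchanged is a sensible verification, but it does not alter the route taken.
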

\begin{proof}
By Theorems \ref{SDETheorem2.2}, \ref{SDETheorem3.6} and the triangle inequality, the proof is completed.
\end{proof}

\section{Numerical results}\label{Sec4}
We numerically verify the above theoretical results and the discrete $L^2$-norm ($\|\cdot\|_{l_2}$) is used to measure the numerical errors at the terminal time, e.g., $t=t_N=1$.
We discretize the space direction by the Galerkin finite element method \cite{ThomeeGalerkin2006}.
Here we mainly focus on the time direction convergence order, since the convergence rate of the spatial discretization is well understood \cite{DuNumerical2002}.
The order of the convergence of the numerical results are computed by 
\begin{equation*}
{\rm Convergence ~Rate}=\frac{\ln \left( \left\|u^{N/2}-u^{N} \right\| / \left\|u^{N}-u^{2N} \right\| \right)}{\ln 2}
\end{equation*}
and
\begin{equation*}
\left\| u^{N/2}-u^{N} \right\| := \sqrt{\mathbb{E} \left\| u^{N/2}-u^{N} \right\|_{l_2}^{2}}
\end{equation*}
with $u^N=V^N+ \upsilon$.

Let $T=1$, $\mathcal{O}=(0, 1)$, $\upsilon(x)=\sin \left( x \right) \sqrt{1-x^{2}}$, and $b\left( x \right)= \cos \left( x \right) \sqrt{1-x^{2}} $.
Next, we briefly discuss the implementation of the term involving the noise defined in~\eqref{SDE1.1}.

Let
\begin{equation*}
\sigma_{j}^{l} \left( t \right) = \left\{
\begin{split}
& \sigma_{j} \left( t \right), \quad j \leq l,\\
& 0, \qquad \quad j>l.
\end{split}\right.
\end{equation*}
For simplicity we choose $\sigma _{j} \left( t \right) =j^{-2} $, that is
\begin{equation*}\label{mm4.1}
\frac{d W(t)}{dt} = \sum_{j=1}^{\infty }j^{-2}\dot{\beta}_{j}(t) \varphi_{j}(x),
\end{equation*}
where $\beta_{j}$, $j=1,2,\ldots$ are the independently identically distributed Brownian motions, and $\varphi_{j}$ are the eigenfunctions of the covariance operator $Q$, see \cite{DuNumerical2002}.
In particular, in the one-dimensional case, we have $\varphi_{j}(x)=\sqrt{2}\sin \left( j\pi x \right)$, $j=1,2, \ldots$.

Using BDF$2$ integrals convolution quadrature formula \cite{ChenDiscretized2015,LubichDiscretized1986}, it yields
\begin{equation*}\label{mm4.2}
\begin{split}
\partial_{t}^{-2} \frac{ d W_{l} \left( t_{n} \right)}{d t }  =  \partial_{t}^{-1}  W_{l} \left( t_{n} \right) 
\approx \overline{\tau} \sum_{k=1}^{\overline{n}} w_{\overline{n}-k}^{\left( -1 \right)} \sum_{j=1}^{l} j^{-2} \beta_{j} \left( \overline{t}_{k} \right) \varphi_{j}
\end{split}
\end{equation*}
with $l=100$.
Here Brownian motions $\{\beta_{j} \}_{j=1}^{l}$ can be generated by MATLAB code, see \cite[p.\,395]{SchmidtStochastic2015}.
Since we do not have an explicit representation of the exact value $\partial_{t}^{-1}  W_{l} \left( t_{n},x \right) $, we compare the numerical integrals with a reference obtained on very fine grids with time step size $\overline{\tau} = 2^{-20}$ and $ t_{n} = n \overline{\tau} = n \frac{1}{N} = n \frac{\overline{N}}{N} \overline{\tau} = \overline{n} \, \overline{\tau}=\overline{t}_{\overline{n}} $ with $\overline{n} = n \overline{N}/ N$ and $\overline{N} = 2^{20}$.
All the expected values are computed with 1000 trajectories.
\begin{table}[htb]
\begin{center}
\caption{Convergent order of ID$1$-BDF$2$ method \eqref{SDE2.17}.}\label{table:1}
\begin{tabular}{c c c c c c c} \hline
\multirow{2}{*}{$\gamma$}  &  \multicolumn{3}{c}{$\alpha=1.3$}    &  \multicolumn{3}{c}{$\alpha=1.7$} \\ \cline{2-7}
                           & N=128      & N= 256     & N= 512      & N=128      & N= 256     & N= 512            \\ \hline
\multirow{2}{*}{0.1}       & 3.0075e-03 & 1.6551e-03 & 9.3191e-04  & 1.5179e-03 & 7.7910e-04 & 3.7855e-04  \\
                           &            & 0.8616     & 0.8286      &            & 0.9621     & 1.0413        \\ \hline
\multirow{2}{*}{0.5}       & 9.9941e-04 & 5.1896e-04 & 2.5345e-04  & 7.5818e-04 & 3.8063e-04 & 1.8622e-04  \\
                           &            & 0.9454     & 1.0339      &            & 0.9941     & 1.0313        \\ \hline
\multirow{2}{*}{0.9}       & 4.2796e-04 & 2.1800e-04 & 1.0674e-04  & 4.3302e-04 & 2.1235e-04 & 1.0925e-04  \\
                           &            & 0.9731     & 1.0303      &            & 1.0279     & 0.9588        \\ \hline
\end{tabular}
\end{center}
\end{table}
\begin{table}[htb]
{\begin{center}
\caption{Convergent order of ID$2$-BDF$2$ method \eqref{SDE2.6}.}\label{table:2}
\begin{tabular}{c c c c c c c} \hline
\multirow{2}{*}{$\gamma$}  &  \multicolumn{3}{c}{$\alpha=1.3$}    &  \multicolumn{3}{c}{$\alpha=1.7$} \\ \cline{2-7}
                           & N=128      & N= 256     & N= 512      & N=128      & N= 256     & N= 512        \\ \hline
\multirow{2}{*}{0.1}       & 2.5488e-03 & 1.3806e-03 & 7.5982e-04  & 1.5534e-04 & 4.7273e-05 & 1.7259e-05  \\
                           &            & 0.8845     & 0.8616      &            & 1.7163     & 1.4536        \\ \hline
\multirow{2}{*}{0.5}       & 1.0291e-04 & 3.8766e-05 & 1.5659e-05  & 1.0073e-04 & 2.5715e-05 & 6.4912e-06  \\
                           &            & 1.4085     & 1.3077      &            & 1.9697     & 1.9860        \\ \hline
\multirow{2}{*}{0.9}       & 2.3775e-05 & 6.4497e-06 & 1.7749e-06  & 9.1247e-05 & 2.3142e-05 & 5.8081e-06 \\
                           &            & 1.8821     & 1.8614      &            & 1.9792     & 1.9943        \\ \hline
\end{tabular}
\end{center}}
\end{table}
\begin{table}[ht!]
{\begin{center}
\caption{Convergent order of ID$3$-BDF$3$ method \eqref{SDE3.3}.}\label{table:3}
\begin{tabular}{c c c c c c c} \hline
\multirow{2}{*}{$\gamma$}  &  \multicolumn{3}{c}{$\alpha=1.3$}    &  \multicolumn{3}{c}{$\alpha=1.7$} \\ \cline{2-7}
                           & N=128      & N= 256     & N= 512      & N=128      & N= 256     & N= 512       \\ \hline
\multirow{2}{*}{0.1}       & 1.2903e-02 & 6.8295e-03 & 3.7112e-03  & 8.2492e-04 & 3.3000e-04 & 1.3579e-04  \\
                           &            & 0.9178     & 0.8798      &            & 1.3217     & 1.2811        \\ \hline
\multirow{2}{*}{0.5}       & 8.2490e-04 & 3.3005e-04 & 1.3579e-04  & 4.9274e-05 & 1.4860e-05 & 4.6480e-06  \\
                           &            & 1.3215     & 1.2813      &            & 1.7293     & 1.6767        \\ \hline
\multirow{2}{*}{0.9}       & 4.8742e-05 & 1.4848e-05 & 4.6476e-06  & 6.9838e-06 & 9.8018e-07 & 1.6609e-07  \\
                           &            & 1.7148     & 1.6757      &            & 2.8328     & 2.5610        \\ \hline
\end{tabular}
\end{center}}
\end{table}

 For the linear stochastic  fractional evolution equation with  integrated additive noise, many predominant time-stepping methods lead to  low order error estimates
with $O\left(\tau^{\min\{\alpha + \gamma -1/2,1\}}\right)$,
see Theorem 5.9 in   \cite{KangGalerkin2022}, also see Table \ref{table:1} by   ID$1$-BDF$2$ method. Here the additive noise \eqref{SDE1.5} is regularized by using a one-fold integral-differential (ID$1$) calculus.

To break the first-order barrier in stochastic fractional PDEs,
we establish the convergence properties of the ID$2$-BDF2 method, demonstrating a convergence order of $O(\tau^{\alpha + \gamma -1/2})$ for $1< \alpha + \gamma \leq 5/2$, and $O(\tau^{2})$ for $5/2< \alpha + \gamma <3$,  which is characterized by Theorem \ref{SDETheorem2.10}, see Table \ref{table:2}.

We also establish the convergence order of the  ID$3$-BDF3 method as $O(\tau^{\alpha + \gamma -1/2})$, which is characterized by Theorem \ref{SDETheorem3.7}, see  Table \ref{table:3}.

\end{document}